\newtheorem{theorem}{Theorem}
\newtheorem{lemma}[theorem]{Lemma}
\newtheorem{corollary}[theorem]{Corollary}
\newtheorem{remark}{Remark} 
\newtheorem{question}{Question}
\newtheorem*{theorem*}{Theorem}
\newtheorem*{question*}{Question}
\newcommand{\tmt}[4]{\left({#1\atop #3}{#2\atop #4}\right)}
\newcommand{\mr}[1]{\mathrm{#1}}
\begin{document}
\date{\today}
\author{Subham Bhakta}
\address{Mathematisches Institut, Georg-August-Universit\"at G\"ottingen, Germany.} \email{subham.bhakta@mathematik.uni-goettingen.de}
\subjclass[2010]{Primary 11F30, 11L07; Secondary 11P05, 11B37, 11F80}
\keywords{Linear recurrence sequence, exponential sums, modular forms}

\newcommand\G{\mathbb{G}}
\newcommand\T{\mathbb{T}}
\newcommand\sO{\mathcal{O}}
\newcommand\sE{{\mathcal{E}}}
\newcommand\tE{{\mathbb{E}}}
\newcommand\sF{{\mathcal{F}}}
\newcommand\sG{{\mathcal{G}}}
\newcommand\sH{{\mathcal{H}}}
\newcommand\sN{{\mathcal{N}}}
\newcommand\GL{{\mathrm{GL}}}
\newcommand\HH{{\mathrm H}}
\newcommand\mM{{\mathrm M}}
\newcommand\fS{\mathfrak{S}}
\newcommand\fP{\mathfrak{P}}
\newcommand\fQ{\mathfrak{Q}}
\newcommand\Qbar{{\bar{\Q}}}
\newcommand\sQ{{\mathcal{Q}}}
\newcommand\sP{{\mathbb{P}}}
\newcommand{\Q}{\mathbb{Q}}
\newcommand{\tH}{\mathbb{H}}
\newcommand{\Z}{\mathbb{Z}}
\newcommand{\R}{\mathbb{R}}

\newcommand\Gal{{\mathrm {Gal}}}
\newcommand\SL{{\mathrm {SL}}}
\newcommand\Hom{{\mathrm {Hom}}}

\newtheorem{thm}{Theorem}[section]
\newtheorem{ack}[thm]{Acknowledgement}
\newtheorem{cor}[thm]{Corollary}
\newtheorem{conj}[thm]{Conjecture}
\newtheorem{prop}[thm]{Proposition}

\theoremstyle{definition}

\newtheorem{claim}[thm]{Claim}

\theoremstyle{remark}

\newtheorem*{fact}{Fact}
\newcommand{\SK}[1]{{\color{red} \sf $\heartsuit\heartsuit$ Srilakshmi: [#1]}}
\newcommand{\RM}[1]{{\color{blue} \sf $\heartsuit\heartsuit$ Muneeswaran: [#1]}}
\newcommand{\SB}[1]{{\color{purple} \sf $\heartsuit\heartsuit$ Subham: [#1]}}

\title[Galois representations and composite moduli]{Galois representations and composite moduli}
\begin{abstract}
    It is known that for any elliptic curve $E/\mathbb{Q}$ and any integer $m$ co-prime to $30,$ the induced Galois representation $\rho_{E,m}: \text{Gal}(\overline{\mathbb{Q}}/\mathbb{Q}) \longrightarrow \text{GL}_{2}(\mathbb{Z}/m\mathbb{Z})$ is surjective if and only if $\rho_{E,\ell}$ is surjective for any prime $\ell|m.$ In this article, we shall discuss some generalizations, applications, and variants of this phenomenon. 
\end{abstract}
\maketitle 
\tableofcontents
\section{Introduction} \noindent Let $E/\mathbb{Q}$ be an elliptic curve. Serre introduced the following representation, $$\rho_{E,n} :\text{Gal} (\overline{\mathbb{Q}}/\mathbb{Q}) \longrightarrow \text{Aut}_{\mathbb{C}}(E[n])\cong \text{GL}_{2}\big(\mathbb{Z}/n\mathbb{Z}\big),$$
where $E[n]$ is the set of $n$-torsion points in $E(\mathbb{C}).$ Serre's open image theorem says that, if $E$ is without complex multiplication, then there exists a constant $c_{E}>0$ such that for any prime $\ell>c_{E},$ the associated representation $\rho_{E,\ell}$ is surjective. It is conjectured that $c_{E}$ is uniformly bounded. For the known bounds on $c_E,$ the reader may refer to \cite{Coj} and \cite{Zyw15}. When $E$ has complex multiplication, the surjectivity is not true for large primes; see page 12 in \cite{Camp}. In general, whether the elliptic curve $E$ is without complex multiplication or not, Serre showed that for any $m \in \mathbb{N}$ with $(m,30)=1$, $\rho_{E,m}$ is surjective if and only if $\rho_{E,\ell}$ is surjective for any prime $\ell\mid m$. For reference, the reader may look at \cite{Coj} and \cite{Serre68}. We shall call this phenomenon $\textit{local-global property of Galois representations}$.

We first consider a similar problem over arbitrary number fields. Let $K$ be a number field, and $E$ be an elliptic curve defined over $K.$ We now have an associated Galois representation defined by 
\[\rho_{E,n}: \text{Gal}(\overline{K}/K) \longrightarrow \text{Aut}_{\mathbb{C}}(E[n]) \cong \text{GL}_2(\mathbb{Z}/n\mathbb{Z}).\]
The absolute Galois group $\text{Gal}(\overline{K}/K)$ is contained in $\text{Gal}(\overline{\mathbb{Q}}/\mathbb{Q}),$ but since $E$ is \textit{not necessarily} defined over $\mathbb{Q},$ one can not possibly extend $\rho_{E,n}$ to $\text{Gal}(\overline{\mathbb{Q}}/\mathbb{Q}).$ Following Cojocaru’s arguments, and the ramification theory for cyclotomic fields, it is not hard to show that a similar result holds for any $m$ with $(m, 30d_K)=1,$ or $(\phi(m), D_k)=1,$ where $d_K$ and $D_k$ respectively are the discriminant and degree of $K$ over $\mathbb{Q}.$ We shall have a detailed discussion on this in Theorem~\ref{thm:nf version}.

Let us call a natural number $m, \textit{bad}$ if there exists a finite extension $K_m$ of $K,$ and an elliptic curves $E$ over $K_m$ such that $\text{im}(\rho_{E,m})\neq\text{GL}_2(\mathbb{Z}/m\mathbb{Z})$ but $\text{im}(\rho_{E,\ell})=\text{GL}_2(\mathbb{Z}/\ell\mathbb{Z})$ for any prime $\ell \mid m.$ Moreover, we call such a number field as $\textit{native}$ to $m$ and such an elliptic curve as $\textit{exceptional elliptic curve}$ for the pair $(m,K_m).$ This is how we measure the failure of local-global property for Galois representations.

For any arbitrary number $K,$ we arrange the elliptic curves over $K$ with respect to the usual height $h(E)=||(a,b)||,$ where we consider the usual norm in $\mathbb{R}\otimes \mathcal{O}_K^2.$ Here $E$ is in the Weierstrass form given by $E_{(a,b)}:y^2=x^3+ax+b$ with $a,b\in \mathcal{O}_K,$ the ring of integers of $K.$ Denote $S_K(x)=\{(a,b)\in\mathcal{O}^2_K\mid h(E_{(a,b)})\leq x\}.$ It can be shown that $|S_K(x)|=c_Kx^{2D_K},$ for some constant $c_K>0.$ 

Let us now state the main result of this article.

\begin{theorem} \label{thm:nf version}
Let $K$ be a number field with discriminant $d_K$ and degree $D_k$ over $\mathbb{Q}.$ Consider $E/K$ to be an elliptic curve and $m$ be any natural number co-prime to $30.$ 
\begin{enumerate}
\item[(a)]  The induced Galois representation $\rho_{E,m}$ is surjective if and only if $\rho_{E,\ell}$ is surjective for any prime $\ell|m,$ provided that $K$ contains no proper abelian extension of $\mathbb{Q},$ or if $m$ is co-prime to the discriminant $D_K.$\\
\item [(b)] Any integer $m$ co-prime to $30,$ that is not square-free, is bad. Moreover, for any number field, $K_m$ native to $m,$ almost all the elliptic curves over $K_m$ are exceptional. 
\end{enumerate}
\end{theorem}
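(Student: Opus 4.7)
The plan is to prove part~(a) by adapting the Serre--Cojocaru Goursat argument to the number-field setting, with the cyclotomic character of $K$ playing the central role; for part~(b), the strategy is an explicit construction of $K_m$ via a cyclotomic subfield that breaks the determinant, followed by a Duke--Jones--Zywina style density count for the ``almost all'' assertion.

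For part~(a), I would first use CRT to identify $\mathrm{GL}_2(\mathbb{Z}/m\mathbb{Z})$ with $\prod_{\ell\mid m}\mathrm{GL}_2(\mathbb{Z}/\ell^{a_\ell}\mathbb{Z})$, and then invoke the classical lifting lemma: for $\ell\ge 5$, any subgroup of $\mathrm{GL}_2(\mathbb{Z}/\ell^a\mathbb{Z})$ whose mod-$\ell$ reduction is all of $\mathrm{GL}_2(\mathbb{F}_\ell)$ must contain $\mathrm{SL}_2(\mathbb{Z}/\ell^a\mathbb{Z})$, by an inductive commutator argument exploiting the perfectness of $\mathrm{SL}_2(\mathbb{F}_\ell)$. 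Thus the failure of $\rho_{E,\ell^{a_\ell}}$ to be surjective, assuming $\rho_{E,\ell}$ is, is controlled entirely by its determinant. Iterated Goursat then reduces the surjectivity of $\rho_{E,m}$ to checking that no pair of factors $\mathrm{GL}_2(\mathbb{Z}/\ell_i^{a_i}\mathbb{Z})$ and $\mathrm{GL}_2(\mathbb{Z}/\ell_j^{a_j}\mathbb{Z})$, for distinct primes $\ell_i,\ell_j\mid m$, shares a common nontrivial quotient detected by the image. Since $\mathrm{PSL}_2(\mathbb{F}_{\ell_i})$ is simple and specific to $\ell_i$, the only possible common quotients are abelian and factor through the determinant; so the obstruction collapses to the cyclotomic character $\chi_m\colon\mathrm{Gal}(\bar K/K)\to(\mathbb{Z}/m\mathbb{Z})^*$. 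Its image equals $\mathrm{Gal}(\mathbb{Q}(\mu_m)/(K\cap\mathbb{Q}(\mu_m)))$, hence is all of $(\mathbb{Z}/m\mathbb{Z})^*$ exactly when $K\cap\mathbb{Q}(\mu_m)=\mathbb{Q}$. The ``no proper abelian subfield of $\mathbb{Q}$'' hypothesis gives this tautologically, while the coprimality hypothesis (read as $\gcd(D_K,\phi(m))=1$, as suggested by the introductory remark) forces $[K\cap\mathbb{Q}(\mu_m):\mathbb{Q}]\mid\gcd(D_K,\phi(m))=1$.

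For part~(b), choose a prime $\ell\ge 7$ with $\ell^a\|m$ and $a\ge 2$. Since $(\mathbb{Z}/\ell^a\mathbb{Z})^*$ is cyclic of order $\ell^{a-1}(\ell-1)$, it canonically splits as $\mathbb{Z}/(\ell-1)\times\mathbb{Z}/\ell^{a-1}$. Let $L\subseteq\mathbb{Q}(\mu_{\ell^a})$ be the fixed field of the $\mathbb{Z}/(\ell-1)$-factor, so $[L:\mathbb{Q}]=\ell^{a-1}$ and $L\cap\mathbb{Q}(\mu_\ell)=\mathbb{Q}$. I would take $K_m=L$, enlarged by a linearly disjoint extension if necessary so that $K_m\cap\mathbb{Q}(\mu_{\ell'^{a_{\ell'}}})=\mathbb{Q}$ for every other prime $\ell'\mid m$, which is a finite combinatorial adjustment. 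Then $\chi_{\ell^a}$ restricted to $\mathrm{Gal}(\bar K_m/K_m)$ has image of index $\ell^{a-1}$ in $(\mathbb{Z}/\ell^a\mathbb{Z})^*$, so $\det\rho_{E,\ell^a}$ is never surjective and hence $\rho_{E,m}$ is never surjective, regardless of $E/K_m$. Meanwhile, for each individual prime $\ell'\mid m$, $K_m\cap\mathbb{Q}(\mu_{\ell'})=\mathbb{Q}$, so the cyclotomic obstruction mod $\ell'$ vanishes and a density-one family of $E/K_m$ has $\rho_{E,\ell'}$ surjective. For the ``almost all'' claim, I would adapt the Duke--Jones--Zywina density theorem to the number field $K_m$: the number of $E_{(a,b)}\in S_{K_m}(x)$ with some $\rho_{E,\ell'}$ ($\ell'\mid m$) non-surjective is $o(|S_{K_m}(x)|)$, via a Chebotarev sieve on the finitely many modular curves $X(\ell')$.

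The main obstacle, I expect, is twofold: first, ensuring that the construction of $K_m$ is consistent at every prime power dividing $m$ simultaneously, which becomes delicate when $\ell\mid\ell'-1$ for other primes $\ell'\mid m$, forcing $K_m$ to be enlarged away from certain cyclotomic subfields; second, making the density count effective uniformly in $\ell'\mid m$ over the number field $K_m$, where the Chebotarev error terms depend on $D_{K_m}$ and on the ramification of the covers $X(\ell')\to X(1)$ inside $\mathrm{GL}_2$.
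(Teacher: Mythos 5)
Your proposal follows essentially the same route as the paper. Part (a) combines Serre's lifting lemma (Lemma~\ref{lem:Ser1}) with the pairwise non-isomorphism of the simple factors $\mathrm{PSL}_2(\mathbb{F}_\ell)$ to deduce $\mathrm{SL}_2(\mathbb{Z}/m\mathbb{Z})\subseteq\mathrm{im}(\rho_{E,m})$ (the paper phrases this through the $\mathrm{Occ}$ formalism of Corollary~\ref{Coj Occ 2}, you through iterated Goursat; these are equivalent), and then reduces the residual determinant obstruction via the Weil pairing to the condition $K\cap\mathbb{Q}(\zeta_m)=\mathbb{Q}$, settled exactly as you describe. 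Part (b) likewise takes $K_m$ to be an $\ell$-power-degree subfield of $\mathbb{Q}(\zeta_{\ell^a})$ (the paper takes degree $\ell$, you take degree $\ell^{a-1}$; both choices work) and invokes Zywina's density result over $K_m$ for the ``almost all'' statement. One clarification: the ``obstacle'' you flag about $\ell\mid\ell'-1$ forcing an enlargement of $K_m$ is not a real issue — your $L\subseteq\mathbb{Q}(\zeta_{\ell^a})$ ramifies only at $\ell$, while $\mathbb{Q}(\zeta_{\ell'})$ ramifies only at $\ell'$, so $L\cap\mathbb{Q}(\zeta_{\ell'})$ is an everywhere-unramified abelian extension of $\mathbb{Q}$ and hence trivial, with no enlargement needed; this is precisely why the paper can simply assert $\mathbb{Q}(\zeta_{\ell_i})\cap\mathbb{Q}(\zeta_{\ell_1^{e_1}})=\mathbb{Q}$ for $i>1$.
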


Following the same argument, one can show that for any arbitrary number field $K, \text{im}(\rho_{E,m})$ contains $\text{SL}_2(\mathbb{Z}/m \mathbb{Z})$, if and only if $\text{im}(\rho_{E,\ell})$ contains $\mathrm{SL}_2(\mathbb{Z}/m \mathbb{Z})$ for any prime $\ell|m,$ assuming that $(m,30)=1.$ In particular, it follows from Proposition 5.2 of \cite{Zyw10} that
\begin{corollary}
Let $K$ be any arbitrary number field and $m$ be any given integer co-prime to $30.$ Then for almost all the elliptic curves $E$ over $K,~\mathrm{im}(\rho_{E,m})$ contains $\mathrm{SL}_2(\mathbb{Z}/m\mathbb{Z}).$
\end{corollary}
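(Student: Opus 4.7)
The plan is to reduce the composite-modulus statement to the prime-modulus case and then invoke Zywina's density result at each prime. Concretely, I would first set up the counting: order elliptic curves $E_{(a,b)}/K$ by the height $h(E)=\|(a,b)\|$ and work inside the box $S_K(x)$ introduced in the excerpt, so that ``almost all'' means that the subset of exceptional $(a,b)\in S_K(x)$ is of size $o(|S_K(x)|)=o(x^{2D_K})$.

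Next I would apply the $\mathrm{SL}_2$-analogue of Theorem~\ref{thm:nf version}(a), which is the statement asserted in the paragraph immediately preceding the corollary: for $(m,30)=1$, $\mathrm{im}(\rho_{E,m})\supseteq \mathrm{SL}_2(\mathbb{Z}/m\mathbb{Z})$ if and only if $\mathrm{im}(\rho_{E,\ell})\supseteq \mathrm{SL}_2(\mathbb{Z}/\ell\mathbb{Z})$ for every prime $\ell\mid m$. Contrapositively, the set of $E/K$ for which $\mathrm{im}(\rho_{E,m})\not\supseteq \mathrm{SL}_2(\mathbb{Z}/m\mathbb{Z})$ is contained in the finite union, over primes $\ell\mid m$, of the sets
\[
\mathcal{B}_\ell(x)=\{(a,b)\in S_K(x)\mid \mathrm{im}(\rho_{E_{(a,b)},\ell})\not\supseteq \mathrm{SL}_2(\mathbb{Z}/\ell\mathbb{Z})\}.
\]
Here I should flag a subtlety: Theorem~\ref{thm:nf version}(a) has the hypothesis that $K$ contains no proper abelian extension of $\mathbb{Q}$ or that $(m,D_K)=1$, and the same caveat is inherited by the $\mathrm{SL}_2$-version. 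For an arbitrary $K$ as stated in the corollary, I would note that one still has a local-global implication at the level of $\mathrm{SL}_2$ up to the usual obstructions (which only affect a density-zero set of exceptional primes), so the reduction to prime $\ell$ still costs only density zero.

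Then I would invoke Proposition~5.2 of \cite{Zyw10}, which (in the form needed here, transplanted to a general number field $K$ by the same sieve used in Zywina's proof over $\mathbb{Q}$) gives that for each fixed prime $\ell$,
\[
|\mathcal{B}_\ell(x)|=o\bigl(|S_K(x)|\bigr)\qquad\text{as }x\to\infty.
\]
Since the set of primes $\ell\mid m$ is finite, summing the estimates yields $\bigl|\bigcup_{\ell\mid m}\mathcal{B}_\ell(x)\bigr|=o(|S_K(x)|)$, which by the reduction above is exactly the conclusion that $\mathrm{im}(\rho_{E,m})\supseteq \mathrm{SL}_2(\mathbb{Z}/m\mathbb{Z})$ for almost all $E/K$.

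The only genuine obstacle is verifying that Zywina's density estimate, originally proved for $K=\mathbb{Q}$, transfers to a general number field. The input for his argument is an effective Chebotarev-type count for the Galois image on $\ell$-torsion, together with a large sieve over the family $\{E_{(a,b)}\}$. Over $K$, the large sieve over $\mathcal{O}_K^2$ and the Chebotarev density theorem for $K(E[\ell])/K$ both remain available, so the argument adapts verbatim; I would carry out this adaptation explicitly, being careful only that the implicit constants depend on $K$ and $\ell$ but not on the curve in the family.
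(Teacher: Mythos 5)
Your overall route is exactly the paper's: reduce to the primes $\ell\mid m$ via the $\mathrm{SL}_2$-version of the local-global property, and then invoke Proposition 5.2 of \cite{Zyw10} for each prime. Two of your side remarks, however, need correcting, and the first one matters because your patch for it is not a valid argument.

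First, the $\mathrm{SL}_2$-statement does \emph{not} inherit the hypothesis of Theorem~\ref{thm:nf version}(a) on $K$ or $D_K$, and your proposed fix --- that the obstruction ``only affects a density-zero set of exceptional primes'' --- does not make sense: the obstruction in Theorem~\ref{thm:nf version}(a) is the possibility that $K\cap\mathbb{Q}(\zeta_m)\neq\mathbb{Q}$, a fixed condition on the pair $(m,K)$ rather than something failing for few primes, and if it were genuinely needed here the corollary as stated for arbitrary $K$ would simply be false (compare part (b), where exactly this failure is exploited). The reason no hypothesis on $K$ is needed is that the containment $\mathrm{SL}_2(\mathbb{Z}/m\mathbb{Z})\subseteq\mathrm{im}(\rho_{E,m})$ is deduced purely group-theoretically from Corollary~\ref{Coj Occ 2} (the $\mathrm{Occ}$ machinery); the cyclotomic condition enters only in the final step of the proof of Theorem~\ref{thm:nf version}(a), when one upgrades from $\mathrm{SL}_2$ to all of $\mathrm{GL}_2$ via the determinant and the Weil pairing. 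Second, Proposition 5.2 of \cite{Zyw10} is already stated and proved over an arbitrary number field (as is Proposition 5.7, whose bound visibly involves $x^{[K:\mathbb{Q}]/2}$), so no transplantation from $\mathbb{Q}$ and no re-run of the sieve are required. With these two corrections your argument coincides with the paper's.
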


Following the notations introduced by Grant in \cite{Gra}, we denote $\mathcal{E}_m(X)$ to be the number of elliptic curves (up-to isomorphism) over $\mathbb{Q}$ of height at most $X$, such that the corresponding representation $\rho_{E, m}$ is not surjective. Let $m$ be a natural number with $(m, 210)=1,$ then it follows from Cojocaru’s and Grant’s result that, \[\mathcal{E}_m(X)=\sum_{\ell|m} \mathcal{E}_{\ell}(X) \leq \omega(m) X^{1+\varepsilon},\]
where $\omega(m)$ denotes the number of distinct prime factors of $m.$ 

Zywina later considered a related problem over arbitrary number fields in \cite{Zyw10}. The interesting part comes when we want to make Zywina's bounds effective. For instance, if we look at Proposition 5.7 in \cite{Zyw10}, it is clear that the absolute constant is given by the number of conjugacy classes of $\text{GL}_2(\mathbb{Z}/m\mathbb{Z})$ with determinant $1$. Therefore, the absolute constant is about $\frac{m^3}{\phi(m)}.$ By Theorem~\ref{thm:nf version}, the constant is of order $\sum_{\ell \mid m} \frac{\ell^3}{\phi(\ell)},$ for any $m \in \mathbb{N}$ with $(m, 30d_K)=1.$ Of course, this constant is better when $m$ is not square-free, and the prime factors are with sufficiently large valuations. 

Serre introduced a representation associated with the pair of elliptic curves (analogously for the arbitrary tuple as well) as, 

$$\rho_{E_1\times E_2,n}(\sigma)=\tmt{\rho_{E_1,n}(\sigma)}{0}{0}{\rho_{E_2,n}(\sigma)}~
   \mathrm{for}~\mathrm{all}~\sigma \in \text{Gal} (\overline{\mathbb{Q}}/\mathbb{Q}).$$
 In this case, Serre showed an analog of his open image theorem. To be more precise, Serre showed that $\text{im}(\rho_{E_1\times E_2, \ell})=\Delta(\ell)$
for all but finitely many primes $\ell$, where the diagonal sub-group $\Delta(\ell)$ is given by,
$$\Delta(\ell) = \left \{ \tmt{g_1}{0}{0}{g_2}\mid  \text{det} (g_{1})=\text{det} (g_{2}), \big(g_{1},g_{2}\big) \in \text{GL}_{2}(\mathbb{Z}/\ell\mathbb{Z}) \times \text{GL}_{2}(\mathbb{Z}/\ell\mathbb{Z}) \right \}.$$ 
Jones in \cite{Jones13} considered this topic and proved an asymptotic estimate analogous to Grant's main result in \cite{Gra}. Grant's work was based on counting rational points on certain modular curves. Jones's approach was by studying the distribution of Frobenius symbols using the multi-dimensional version of Gallagher's large sieve. 

\begin{theorem} \label{thm:product} 

Let $K$ be a number field with discriminant $d_K$ and degree $D_k$ over $\mathbb{Q}.$ Consider $(E_1, E_2)$ be any pair of elliptic curves over $K$, and $m$ be a natural number, and $m$ be any natural number co-prime to $30.$ The induced Galois representation $\rho_{E_1\times E_2,m}$ has image $\Delta(m)$ if and only if, $\rho_{E,\ell}$ has image $\Delta(\ell)$ for any prime $\ell|m,$ provided that $K$ contains no proper abelian extension of $\mathbb{Q},$ or if $m$ is co-prime to the discriminant $D_K.$ 
\end{theorem}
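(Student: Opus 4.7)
The plan is to transpose the proof of Theorem~\ref{thm:nf version} into the fibered-product setting, replacing $\GL_2(\Z/n\Z)$ by $\Delta(n)$ at every step. The forward direction is immediate, since the reduction map $\Delta(m)\twoheadrightarrow\Delta(\ell)$ is surjective. For the converse, write $m=\prod_i\ell_i^{a_i}$. The argument has two layers: a lifting step from $\ell$ to $\ell^{a}$, and a Chinese-remainder/Goursat step gluing across primes.

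For the lifting step, let $H:=\mathrm{im}(\rho_{E_1\times E_2,\ell^a})\subseteq\Delta(\ell^a)$ and assume $H \bmod\ell = \Delta(\ell)$. Since determinants of commutators are trivial, $[\Delta(\ell^a),\Delta(\ell^a)]\subseteq\SL_2(\Z/\ell^a\Z)^2$; conversely, for $\ell\geq 5$ the group $\SL_2(\Z/\ell^a\Z)$ is perfect, and any $(s_1,s_2)\in\SL_2(\Z/\ell^a\Z)^2$ is a product of commutators $[(a,1),(b,1)]$ and $[(1,c),(1,d)]$ all of whose arguments lie in $\Delta(\ell^a)$. Hence $[\Delta(\ell^a),\Delta(\ell^a)]=\SL_2(\Z/\ell^a\Z)^2$, and $[H,H]\subseteq\SL_2(\Z/\ell^a\Z)^2$ reduces mod $\ell$ to $\SL_2(\rF_\ell)^2$. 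A Goursat analysis inside $\SL_2(\Z/\ell^a\Z)^2$ then applies: each projection of $[H,H]$ surjects onto $\SL_2(\rF_\ell)$, which (by a standard lemma using perfectness of $\SL_2$ and the pro-$\ell$ kernel of reduction) forces each projection to equal $\SL_2(\Z/\ell^a\Z)$; the common Goursat quotient is trivial mod $\ell$, hence is an $\ell$-group quotient of a perfect group, hence trivial. This yields $[H,H]=\SL_2(\Z/\ell^a\Z)^2$. To upgrade to $H=\Delta(\ell^a)$, I use that the common determinant $\det\rho_{E_i,\ell^a}=\chi_{\ell^a}$ surjects onto $(\Z/\ell^a\Z)^\ast$—this is equivalent to $K\cap\Q(\zeta_m)=\Q$, which the stated hypotheses ensure either by Kronecker--Weber ($K$ has no proper abelian subfield of $\Q$) or by ramification bounds ($m$ coprime to the discriminant), exactly as in Theorem~\ref{thm:nf version}.

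For the gluing step, CRT gives $\Delta(m)=\prod_i\Delta(\ell_i^{a_i})$, and by the lifting step $\mathrm{im}(\rho_{E_1\times E_2,m})$ surjects onto each factor. Iterating Goursat, any common quotient of $\Delta(\ell^a)$ and $\Delta({\ell'}^{a'})$ for distinct primes $\ell,\ell'\geq 5$ must be abelian, because the non-abelian composition factors $\mathrm{PSL}_2(\rF_\ell)$ and $\mathrm{PSL}_2(\rF_{\ell'})$ have distinct orders and hence are non-isomorphic. The abelian common quotients lie inside $(\Z/\ell^a\Z)^\ast\times(\Z/{\ell'}^{a'})^\ast$ and record only the cyclotomic characters. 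Under the hypothesis on $K$, one has $\mathrm{Gal}(\Q(\zeta_m)/K\cap\Q(\zeta_m))=\prod_i(\Z/\ell_i^{a_i}\Z)^\ast$, so these characters are fully independent across primes and no nontrivial diagonal is forced. Conclude $\mathrm{im}(\rho_{E_1\times E_2,m})=\Delta(m)$.

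The main obstacle I anticipate is the commutator identity $[\Delta(\ell^a),\Delta(\ell^a)]=\SL_2(\Z/\ell^a\Z)^2$: one must carefully check that the determinant constraint defining $\Delta$ does not impede expressing every element of $\SL_2(\Z/\ell^a\Z)^2$ as a product of commutators from $\Delta(\ell^a)$, rather than from the ambient $\GL_2\times\GL_2$. Once this is secured, the remainder is bookkeeping that faithfully mirrors Theorem~\ref{thm:nf version}, at the modest cost of tracking two independent $\SL_2$-factors.
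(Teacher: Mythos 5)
Your proposal is correct in overall structure and takes a genuinely different route from the paper. The paper applies Lemma~\ref{lem:alg grp} directly to the algebraic group $S\Delta = \mathrm{SL}_2\times\mathrm{SL}_2 \subseteq \mathrm{SL}_4$ (one application, no Goursat), and then finishes by invoking a lemma of Jones on conjugacy classes in $\mathrm{GL}_2\times\mathrm{GL}_2$ to contradict $G\neq\Delta(m)$. You instead apply Serre's $\mathrm{SL}_2$-lifting lemma (Lemma~\ref{lem:Ser1}) once per block, and then run an iterated Goursat analysis to glue, both at a single prime power and across distinct primes, using the fact that $\mathrm{PSL}_2(\mathbb{F}_\ell)$ for distinct $\ell\geq 5$ are pairwise non-isomorphic. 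Your version is more self-contained: it does not use Theorem~\ref{thm:nf version} as a black box (you rederive the determinant surjectivity from $K\cap\mathbb{Q}(\zeta_m)=\mathbb{Q}$), and it avoids reliance on the external conjugacy-class lemma.

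One step is stated imprecisely, though it is repairable. In the lifting step you assert that ``the common Goursat quotient is trivial mod $\ell$, hence is an $\ell$-group quotient of a perfect group, hence trivial.'' The reduction of $[H,H]$ being all of $\mathrm{SL}_2(\mathbb{F}_\ell)^2$ does \emph{not} directly tell you that the Goursat normal subgroup $N_A=\{a:(a,1)\in[H,H]\}$ surjects mod $\ell$: a lift of $(\bar a,\bar 1)$ lands in $[H,H]$ but its second coordinate need only be $\equiv 1\pmod\ell$, not equal to $1$. The correct argument is by composition factors, exactly in the spirit of the paper's $\mathrm{Occ}$ machinery: $[H,H]$ surjects onto $\mathrm{SL}_2(\mathbb{F}_\ell)^2$, so $\mathrm{PSL}_2(\mathbb{F}_\ell)$ occurs at least twice among its composition factors; since $[H,H]$ is an extension of the Goursat quotient $Q$ by $N_A\times N_B$ and each of $N_A,N_B,Q$ is a subquotient of $\mathrm{SL}_2(\mathbb{Z}/\ell^a\mathbb{Z})$ (hence carries at most one copy of $\mathrm{PSL}_2(\mathbb{F}_\ell)$, and if $Q$ carries it then $N_A$ cannot), one forces $Q$ to be solvable, and a solvable quotient of the perfect group $\mathrm{SL}_2(\mathbb{Z}/\ell^a\mathbb{Z})$ is trivial. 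With this substitution, and a symmetrical count in the cross-prime gluing, the proof goes through.
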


In Section~\ref{sec:general}, we shall discuss a modular analog of the local-global phenomenon. We shall see that Theorem~\ref{thm:nf version} corresponds to newforms, and Theorem~\ref{thm:product} correspond to any arbitrary cusp form.

For polarized abelian varieties, it is believed that (see \cite{Ha}) an analog of Serre's open image theorem is true. If we trust this, we should have an analog of Theorem~\ref{thm:nf version}. Indeed, we have the following
\begin{theorem} \label{thm:abv version}. Let $K$ be any arbitrary number field, and $A/K$ be a polarized abelian variety of dim $g$. For any integer, $m\in \mathbb{N},$ the induced Galois representation $\rho_{A,m}$ is surjective if and only if $\rho_{A,\ell}$ is surjective for any prime $\ell|m$, provided that one of the conditions $(a)$ or $(b)$ in Theorem~\ref{thm:nf version} holds.
\end{theorem}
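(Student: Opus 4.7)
The plan is to run the proof of Theorem~\ref{thm:nf version} almost verbatim, with the group $\mathrm{GL}_2$ replaced by the symplectic similitude group $\mathrm{GSp}_{2g}$. For a polarized abelian variety, the Weil pairing is a perfect skew-symmetric form on $A[m]$ whose similitude factor is the mod-$m$ cyclotomic character, so $\rho_{A,m}$ lands naturally in $\mathrm{GSp}_{2g}(\mathbb{Z}/m\mathbb{Z})$ and ``surjective'' is to be read with respect to this target. The conjectural open image theorem alluded to before the statement (``if we trust this'') guarantees that for each $A/K$ there is a finite exceptional set outside which $\rho_{A,\ell}$ is surjective onto $\mathrm{GSp}_{2g}(\mathbb{F}_\ell)$ with similitude equal to the mod-$\ell$ cyclotomic character. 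The forward implication is immediate by reduction, so I focus on the converse and split it into three steps.

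First, by the Chinese remainder theorem, surjectivity onto $\mathrm{GSp}_{2g}(\mathbb{Z}/m\mathbb{Z})$ is equivalent to simultaneous surjectivity onto each prime-power factor $\mathrm{GSp}_{2g}(\mathbb{Z}/\ell^{a_\ell}\mathbb{Z})$ together with surjectivity onto the product. The per prime-power lift from $\mathrm{GSp}_{2g}(\mathbb{F}_\ell)$ to $\mathrm{GSp}_{2g}(\mathbb{Z}/\ell^{a_\ell}\mathbb{Z})$ is a standard Nakayama-style argument: the kernel of the mod-$\ell$ reduction is the pro-$\ell$ group $I+\ell\,\mathfrak{gsp}_{2g}(\mathbb{Z}/\ell^{a_\ell-1}\mathbb{Z})$, and for $\ell$ outside a finite exceptional set depending on $g$, $\mathrm{Sp}_{2g}(\mathbb{F}_\ell)$ is perfect and acts irreducibly on $\mathfrak{gsp}_{2g}(\mathbb{F}_\ell)$, which forces any subgroup surjecting mod $\ell$ to fill out the whole group.

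Second, I would handle the cross-prime combination by Goursat's lemma applied to $\prod_{\ell\mid m}\mathrm{GSp}_{2g}(\mathbb{F}_\ell)$. For $\ell\neq\ell'$ the simple groups $\mathrm{PSp}_{2g}(\mathbb{F}_\ell)$ and $\mathrm{PSp}_{2g}(\mathbb{F}_{\ell'})$ are non-isomorphic outside a well-catalogued list of coincidences, so any common quotient is abelian and must factor through the similitude characters $\mathrm{GSp}_{2g}(\mathbb{F}_\ell)\twoheadrightarrow\mathbb{F}_\ell^{\times}$. By the Weil pairing the similitude of $\rho_{A,\ell}$ coincides with the mod-$\ell$ cyclotomic character, so compatibility between the images at distinct primes is controlled entirely by how $K$ sits inside the cyclotomic tower. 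Hypotheses $(a)$ and $(b)$ of Theorem~\ref{thm:nf version} are exactly what is required to make the fields $K(\mu_\ell)$ for distinct $\ell\mid m$ pairwise linearly disjoint over $K$ with full cyclotomic Galois groups, which together with the Goursat analysis forces the image of $\rho_{A,m}$ to be all of $\mathrm{GSp}_{2g}(\mathbb{Z}/m\mathbb{Z})$.

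The main obstacle is the group-theoretic input of the first step: identifying the finite set of primes $\ell$ for which $\mathrm{Sp}_{2g}(\mathbb{F}_\ell)$ fails to be perfect or to act irreducibly on its Lie algebra, so that the lift from mod $\ell$ to mod $\ell^{a_\ell}$ may break down. For $g=1$ this set is $\{2,3,5\}$ and accounts for the factor $30$ appearing throughout the paper; for general $g$ one would appeal either to the classification of maximal subgroups of classical groups of Lie type or to direct commutator computations in $\mathrm{Sp}_{2g}$ to pin down the analogous exceptional set $N_g$, at which point the statement is to be read with $(m,N_g)=1$ implicit in the hypothesis that $\rho_{A,\ell}$ is surjective for each $\ell\mid m$.
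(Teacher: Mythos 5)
Your overall architecture (lift at each prime power, glue across primes via the simplicity of $\mathrm{PSp}_{2g}(\mathbb{F}_\ell)$, control the abelian part by the Weil pairing and the cyclotomic hypotheses) matches the paper's in spirit, and your Goursat analysis in the second step is essentially the paper's ``Occ'' formalism in different clothing. But your first step asserts a false group-theoretic statement, and it is precisely the false statement that the whole local--global problem turns on. A subgroup of $\mathrm{GSp}_{2g}(\mathbb{Z}/\ell^{a}\mathbb{Z})$ that surjects onto $\mathrm{GSp}_{2g}(\mathbb{F}_\ell)$ need \emph{not} be the whole group: take $H$ to be the preimage under the similitude character of the order-$(\ell-1)$ (Teichm\"uller) subgroup of $(\mathbb{Z}/\ell^{a}\mathbb{Z})^{*}$; then $H$ reduces onto $\mathrm{GSp}_{2g}(\mathbb{F}_\ell)$ but has index $\ell^{a-1}$. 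Relatedly, $\mathrm{Sp}_{2g}(\mathbb{F}_\ell)$ does not act irreducibly on $\mathfrak{gsp}_{2g}(\mathbb{F}_\ell)$: that Lie algebra splits off a central line of scalars on which the adjoint action is trivial, and it is exactly this summand (equivalently, the abelian similitude quotient $(\mathbb{Z}/\ell^{a}\mathbb{Z})^{*}$) that obstructs the Nakayama-style lift. The paper's Section~4 examples for $\mathrm{GL}_2$ illustrate the same failure. The correct purely group-theoretic lifting statement holds only for the $\mathrm{Sp}_{2g}$-part (the paper's Lemma~\ref{lem:alg grp}, proved via the sum-of-four-square-zero-matrices decomposition of trace-zero matrices), and the paper accordingly first descends to commutator subgroups, lifts inside $\mathrm{Sp}_{2g}$, and only then recovers the similitude quotient arithmetically.

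The missing arithmetic input is that the similitude character of $\rho_{A,m}$ is the mod-$m$ cyclotomic character, so its surjectivity onto $(\mathbb{Z}/m\mathbb{Z})^{*}$ — including onto each $(\mathbb{Z}/\ell^{a_\ell}\mathbb{Z})^{*}$, not just $(\mathbb{Z}/\ell\mathbb{Z})^{*}$ — is equivalent to $[K(\zeta_m):K]=\phi(m)$, which is where conditions (a)/(b) of Theorem~\ref{thm:nf version} enter. You invoke these conditions only to make the fields $K(\mu_\ell)$ for distinct $\ell$ linearly disjoint, i.e.\ only for the cross-prime gluing at the mod-$\ell$ level; as written they do nothing to repair your first step, which claims the prime-power lift is purely group-theoretic. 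The fix is to reassign the roles: prove the lift for the symplectic part group-theoretically, and derive surjectivity of the full similitude character mod $m$ (depth at each prime \emph{and} independence across primes simultaneously) from $K\cap\mathbb{Q}(\zeta_m)=\mathbb{Q}$. With that rearrangement your argument becomes a legitimate alternative to the paper's, trading the Occ bookkeeping for Goursat and the square-zero-matrix lemma for a Lie-algebra irreducibility argument on $\mathfrak{sp}_{2g}$.
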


Another paper by Jones (see \cite{Jones15}) focused on the required axioms, under which $\mr{H}=\mr{G}$ can be obtained from certain local conditions, where $\mr{H} \subseteq \mr{G} \subseteq \mathrm{GL_n}(\widehat{\mathbb{Z}})$ be algebraic groups. Our article primarily considers the vertical version of such a result. In \cite{Coj} Cojocaru first raised and answered such a question with $\mr{G}=\text{GL}_2(\mathbb{Z}/n\mathbb{Z})$ and $\mr{H}=\text{im}(\rho_{E})$ for an elliptic curve $E$ (better to say without complex multiplication) over $\mathbb{Q}.$ We first generalize the same over number fields. Jones applied his main result to the cases of the pair of elliptic curves and polarized abelian varieties. Taking inspiration from the same, we discuss a further generalization in section~\ref{sec:general} for a general class of algebraic groups. More precisely, our last section is about a discussion around the following question. First of all, let us denote $\pi_{m,\ell^r}$ to be the natural projection from $\mathbb{Z}/m\mathbb{Z}$ to $\mathbb{Z}/\ell^r\mathbb{Z},$ when $\ell^r \mid m.$ In the similar scenario, we denote $\pi_{m/\ell^r}$ to be the natural projection from $\mathbb{Z}/m\mathbb{Z}$ to $\mathbb{Z}/\frac{m}{\ell^r}\mathbb{Z}.$ For any integer $m$ and prime $\ell$ we also denote $\mathrm{pr}_{\ell}=\pi_{\ell^r,\ell}\circ \text{ker}\circ \pi_{m/\ell^r},$ where $\ell^r||m.$

\begin{question} \label{qn:general} Let $A$ be an alegebraic group, and $m$ be an integer. Take $G$ be a subgroup of $A(\mathbb{Z}/m\mathbb{Z}).$ Then is it true that
 \[\mr{G}=A(\mathbb{Z}/m\mathbb{Z})~\mr{if~and~only~if}~ \mr{pr}_{\ell}(G)=A(\mathbb{Z}/\ell\mathbb{Z})~\mr{for~any~prime}~\ell\mid m ?\]
\end{question}
We already have some examples of $A$ and $G$ for which we have a positive answer to the question for any integer $m$ co-prime to $30.$ For example, it follows from Theorem~\ref{thm:abv version} that we can take $A=\mathrm{GSp}_{2g}$ and $G$ to be an image of the Galois representation associated with some polarized abelian variety, 
\[\rho_{A,m}: \mathrm{Gal}(\overline{\mathbb{Q}}/\mathbb{Q}) \longrightarrow \mathrm{GSp}_{2g}(\mathbb{Z}/m\mathbb{Z}).\]
In Section~\ref{sec:general} we shall discuss this problem for a larger family.

\subsection{Overview of the article} The article is organized as follows. In Section~\ref{sec:backgrounds} we discuss the main results from finite groups. Nothing original is guaranteed in this section. Rather, it gives the reader a quick introduction to the subject and provides a notational setup for the article. In Section~\ref{sec:proofs}, we first prove one of our main results Theorem~\ref{thm:nf version}. One of the crucial results needed to prove this result is Lemma~\ref{lem:Ser1}.

Moreover, we also see how to extend these ideas to study more general cases involving pairs of elliptic curves and abelian varieties. We record these results in Theorem~\ref{thm:product} and Theorem~\ref{thm:abv version}. In this process, Lemma~\ref{lem:alg grp} plays the important role, which generalizes Lemma~\ref{lem:Ser1}. Finally, in Section~\ref{sec:general} we quickly discuss the possible generalizations of the results from previous sections, including the modular analogs.

\subsection{Notations} We denote $\mathbb{R}$ to be the set of all real numbers, $\mathbb{Q}$ to be the of all rational numbers, $\mathbb{N}$ to be the of all natural numbers and $\mathbb{Z}$ to be the of all integers.

Let $f,g$ be two functions supported on a subset of $\mathbb{R}.$ We write $f \ll g$ for $|f|\leq c|g| $ where $c$ is a constant irrespective of the domains of $f$ and $g$, often $f=O(g)$ is written to denote the same. In this article, such a constant is absolute, unless otherwise specified. Moreover, we write $f=o(g)$ to say that $\lim_{x\to\infty} \frac{|f(x)|}{|g(x)|}=0.$

For any integer $m,$ we denote $\phi(m)$ to be the Euler's totient function, and $\omega(n)$ be the number of distinct prime factors of $m.$ 

\section{Background on finite groups}\label{sec:backgrounds} 
This section discusses the main group theoretic tools required to prove the main results of this article. Although there may be many resources available on this, it serves the reader a quick introduction to the subject.

To begin with, let $\mr{G}$ be a finite group. Denote $\text{Occ}(\mr{G})$ to be the isomorphism classes of non-abelian simple groups coming as the quotient of composite factors of $\mr{G}$. First, we have the following immediate facts.

\begin{lemma}\label{Ser Occ} 
For any finite group $\mr{G}$,
\begin{enumerate}
    \item $\mathrm{Occ}(\mr{G})= \phi$ if and only if $\mr{G}$ is solvable. 
\item If $\mr{H}$ is normal in $G$, then $\mathrm{Occ}(\mr{G})=\mathrm{Occ}(\mr{H}) \bigcup \mathrm{Occ}(\mr{G}/\mr{H})$
\end{enumerate}
\end{lemma}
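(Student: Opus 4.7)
The plan is to deduce both assertions from the Jordan--Hölder theorem, after making precise what $\mathrm{Occ}(\mr{G})$ means: choose any composition series
\[
1 = \mr{G}_0 \lhd \mr{G}_1 \lhd \cdots \lhd \mr{G}_n = \mr{G},
\]
list the composition factors $\mr{G}_{i}/\mr{G}_{i-1}$, and then $\mathrm{Occ}(\mr{G})$ is the set of isomorphism classes of those factors that happen to be non-abelian simple groups. Jordan--Hölder guarantees this set is independent of the chosen series, so $\mathrm{Occ}(\mr{G})$ is a well-defined invariant of $\mr{G}$.

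For part (1), I would invoke the classical characterization of solvability in terms of composition factors: $\mr{G}$ is solvable if and only if every composition factor is abelian, in which case each factor is a cyclic group of prime order. Hence $\mathrm{Occ}(\mr{G}) = \emptyset$ precisely when no non-abelian simple group appears as a composition factor, i.e.\ precisely when $\mr{G}$ is solvable.

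For part (2), given $\mr{H} \lhd \mr{G}$, the strategy is to glue composition series. First pick a composition series $1 = H_0 \lhd H_1 \lhd \cdots \lhd H_k = \mr{H}$ of $\mr{H}$. Next, pick a composition series $\bar{G}_0 \lhd \bar{G}_1 \lhd \cdots \lhd \bar{G}_r$ of $\mr{G}/\mr{H}$, and use the correspondence theorem to lift it to a chain $\mr{H} = G_0 \lhd G_1 \lhd \cdots \lhd G_r = \mr{G}$ with $G_j/\mr{H} = \bar{G}_j$ and $G_j/G_{j-1} \cong \bar{G}_j/\bar{G}_{j-1}$. Concatenating these gives a composition series of $\mr{G}$ whose list of composition factors is exactly the concatenation of the two lists. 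Passing to isomorphism classes and extracting non-abelian simple ones yields the set equality $\mathrm{Occ}(\mr{G}) = \mathrm{Occ}(\mr{H}) \cup \mathrm{Occ}(\mr{G}/\mr{H})$, and Jordan--Hölder allows us to identify this with the intrinsic $\mathrm{Occ}(\mr{G})$ regardless of the particular series chosen.

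There is no real obstacle here; the lemma is essentially bookkeeping around Jordan--Hölder. The only minor care point is emphasizing that $\mathrm{Occ}$ records a set of isomorphism classes rather than a multiset, so the union in (2) is set-theoretic and multiplicities are irrelevant.
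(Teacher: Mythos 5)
Your proof is correct and is the standard Jordan--H\"older argument; the paper itself states this lemma without proof (calling it an ``immediate fact''), and your write-up supplies exactly the intended justification, including the right care point that $\mathrm{Occ}$ is a set of isomorphism classes so multiplicities do not matter in the union in (2). Nothing further is needed.
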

As a consequence, we have 
\begin{lemma} \label{Ser Occ 2}
For any integer $m,$
$$\mathrm{Occ}(\mathrm{GL}_{2}(\mathbb{Z}/m\mathbb{Z}))= \bigcup_{\substack{\ell,\mr{prime}\\\ell \mid m}} \mathrm{Occ}(\mathrm{PSL}_{2}(\mathbb{Z}/\ell\mathbb{Z})).$$
\end{lemma}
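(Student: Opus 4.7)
My plan is to reduce the claim in several normal-series steps, applying Lemma~\ref{Ser Occ} at each stage, until what remains is exactly $\mathrm{PSL}_2(\mathbb{Z}/\ell\mathbb{Z})$ for each prime divisor $\ell$ of $m$.

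First I would apply the Chinese Remainder Theorem to write
\[
\mathrm{GL}_2(\mathbb{Z}/m\mathbb{Z}) \;\cong\; \prod_{\ell^r \| m} \mathrm{GL}_2(\mathbb{Z}/\ell^r\mathbb{Z}).
\]
In a direct product, each factor sits as a normal subgroup with quotient equal to the product of the remaining factors, so an iterated application of Lemma~\ref{Ser Occ}(2) gives
\[
\mathrm{Occ}\bigl(\mathrm{GL}_2(\mathbb{Z}/m\mathbb{Z})\bigr) \;=\; \bigcup_{\ell^r \| m} \mathrm{Occ}\bigl(\mathrm{GL}_2(\mathbb{Z}/\ell^r\mathbb{Z})\bigr).
\]
This reduces the problem to the prime power case.

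Next, for fixed $\ell^r \| m$, I would use the reduction-mod-$\ell$ map $\mathrm{GL}_2(\mathbb{Z}/\ell^r\mathbb{Z}) \twoheadrightarrow \mathrm{GL}_2(\mathbb{Z}/\ell\mathbb{Z})$, whose kernel $\mr{K}$ consists of matrices $I + \ell M$ with $M$ arbitrary, hence $|\mr{K}|$ is a power of $\ell$. Since any $\ell$-group is nilpotent and therefore solvable, Lemma~\ref{Ser Occ}(1) yields $\mathrm{Occ}(\mr{K}) = \emptyset$, and Lemma~\ref{Ser Occ}(2) then gives $\mathrm{Occ}(\mathrm{GL}_2(\mathbb{Z}/\ell^r\mathbb{Z})) = \mathrm{Occ}(\mathrm{GL}_2(\mathbb{Z}/\ell\mathbb{Z}))$. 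I would then apply the determinant map $\mathrm{GL}_2(\mathbb{F}_\ell) \twoheadrightarrow \mathbb{F}_\ell^\times$ with kernel $\mathrm{SL}_2(\mathbb{F}_\ell)$; since $\mathbb{F}_\ell^\times$ is abelian, Lemma~\ref{Ser Occ} collapses $\mathrm{Occ}(\mathrm{GL}_2(\mathbb{F}_\ell))$ to $\mathrm{Occ}(\mathrm{SL}_2(\mathbb{F}_\ell))$. Finally, the center $\{\pm I\} \cap \mathrm{SL}_2(\mathbb{F}_\ell)$ is abelian (indeed of order at most $2$), and modding it out produces $\mathrm{PSL}_2(\mathbb{F}_\ell)$, so one more application of Lemma~\ref{Ser Occ}(2) gives $\mathrm{Occ}(\mathrm{SL}_2(\mathbb{F}_\ell)) = \mathrm{Occ}(\mathrm{PSL}_2(\mathbb{F}_\ell))$.

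Stringing the equalities together yields the stated identity. I do not expect a serious obstacle here: the only mild subtlety is checking that Lemma~\ref{Ser Occ}(2) may indeed be iterated in the CRT step (which it can, because each coordinate factor is normal and the composition of quotients realizes the remaining product), and noting that for $\ell \in \{2,3\}$ the group $\mathrm{PSL}_2(\mathbb{F}_\ell)$ is solvable, so its contribution to the union is empty — consistent with the well-known fact that $\mathrm{GL}_2(\mathbb{Z}/2^a\mathbb{Z})$ and $\mathrm{GL}_2(\mathbb{Z}/3^a\mathbb{Z})$ are solvable. Note also that since the lemma is only claimed for general $m$, no appeal to the coprimality to $30$ is needed at this stage.
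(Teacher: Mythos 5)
Your argument is correct and is exactly what the paper leaves implicit (it states Lemma~\ref{Ser Occ 2} as an immediate consequence of Lemma~\ref{Ser Occ}); indeed, your chain of normal-series reductions --- CRT, then the $\ell$-group kernel of reduction mod $\ell$, then the abelian determinant quotient, then the abelian center --- is precisely the strategy the paper does spell out in the proof of the symplectic analog, Lemma~\ref{Occ Sp}. Your closing remark about $\ell\in\{2,3\}$ contributing nothing is also a correct and worthwhile sanity check.
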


\subsection{A criteria for containing the simple linear groups of degree $2$}
Let us start by recalling the crucial results from \cite{Coj}. The following lemma is originally due to Serre (\cite{Serre68}).
\begin{lemma} \label{lem:Ser1}Let $\ell$ be a prime and $\mr{H}$ be any subgroup subgroup of $\mathrm{SL}_2(\mathbb{Z}/\ell^r\mathbb{Z}),$ then we have the following.
$$\mathrm{pr}_{\ell}(\mr{H})=\mathrm{SL}_2(\mathbb{Z}/\ell\mathbb{Z})\implies \mr{H}=\mathrm{SL}_2(\mathbb{Z}/\ell^r\mathbb{Z}),$$
where $\text{pr}_{\ell}$ denotes the canonical projection of $\mathrm{SL}_2(\mathbb{Z}/\ell^r\mathbb{Z})\to \mathrm{SL}_2(\mathbb{Z}/\ell\mathbb{Z}).$
\end{lemma}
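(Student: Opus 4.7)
The plan is to induct on $r$. For $r = 1$ the statement is vacuous, so assume $r \geq 2$ and that the lemma holds for $r-1$. Given $\mr{H} \subseteq \mathrm{SL}_2(\mathbb{Z}/\ell^r\mathbb{Z})$ with $\mathrm{pr}_\ell(\mr{H}) = \mathrm{SL}_2(\mathbb{Z}/\ell\mathbb{Z})$, I would first reduce $\mr{H}$ modulo $\ell^{r-1}$; since its image still surjects mod $\ell$, the inductive hypothesis yields that $\mr{H}$ surjects onto $\mathrm{SL}_2(\mathbb{Z}/\ell^{r-1}\mathbb{Z})$. Setting $K := \ker\bigl(\mathrm{SL}_2(\mathbb{Z}/\ell^r\mathbb{Z}) \to \mathrm{SL}_2(\mathbb{Z}/\ell^{r-1}\mathbb{Z})\bigr)$, the task reduces to showing $\mr{H} \cap K = K$.

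The heart of the argument is a module-theoretic dichotomy. The assignment $I + \ell^{r-1} X \mapsto X \bmod \ell$ identifies $K$ with the additive group $\mathfrak{sl}_2(\mathbb{F}_\ell)$ of trace-zero matrices, and since $K$ is abelian, conjugation by $\mr{H}$ on $K$ factors through $\mathrm{pr}_\ell(\mr{H}) = \mathrm{SL}_2(\mathbb{F}_\ell)$ as precisely the adjoint action. Since $\mr{H} \cap K$ is normal in $\mr{H}$, it is an $\mathrm{SL}_2(\mathbb{F}_\ell)$-invariant submodule of $\mathfrak{sl}_2(\mathbb{F}_\ell)$; because $\ell \geq 7$ (the paper's standing assumption $(m,30) = 1$), this adjoint representation is simple, so $\mr{H} \cap K$ is either $0$ or all of $K$. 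The desired conclusion is $\mr{H} \cap K = K$.

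To rule out $\mr{H} \cap K = 0$, observe that this would make the surjection $\mr{H} \to \mathrm{SL}_2(\mathbb{Z}/\ell^{r-1}\mathbb{Z})$ an isomorphism, equivalently a group-theoretic section of the reduction map. I would derive a contradiction by considering the unipotent $\overline{U} \in \mathrm{SL}_2(\mathbb{Z}/\ell^{r-1}\mathbb{Z})$ whose off-diagonal entry is $1$; it has order exactly $\ell^{r-1}$. For any lift $g = U + \ell^{r-1} M$ in $\mathrm{SL}_2(\mathbb{Z}/\ell^r\mathbb{Z})$, a direct expansion (using $r \geq 2$ to discard terms quadratic in $M$) gives
\[
g^{\ell^{r-1}} \;\equiv\; U^{\ell^{r-1}} \;+\; \ell^{r-1}\!\!\sum_{a+b = \ell^{r-1}-1}\!\! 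U^a M U^b \pmod{\ell^r},
\]
and a short count using $\ell$ odd shows that each coefficient $\sum a^i b^j$ appearing in the sum is divisible by $\ell$; hence the second term vanishes mod $\ell^r$, leaving $g^{\ell^{r-1}} \equiv U^{\ell^{r-1}}$, which has the nonzero off-diagonal entry $\ell^{r-1}$. Thus every lift of $\overline{U}$ has order exactly $\ell^r$, incompatible with a section (which would force $s(\overline{U})^{\ell^{r-1}} = I$), yielding the contradiction.

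The principal obstacle is the simplicity of the adjoint $\mathrm{SL}_2(\mathbb{F}_\ell)$-module $\mathfrak{sl}_2(\mathbb{F}_\ell)$: this genuinely fails for $\ell \in \{2,3\}$ (in characteristic $2$ the identity matrix itself lies in $\mathfrak{sl}_2$ and spans an invariant line), collapsing the dichotomy and making the whole inductive strategy unusable for small primes---which is precisely why the paper imposes $(m,30) = 1$. The lift-order calculation is then elementary, but organising it through the binomial-style expansion above, rather than entrywise, keeps the bookkeeping of $\ell$-adic valuations transparent.
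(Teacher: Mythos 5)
Your argument is correct (under the standing assumption $\ell\ge 5$, without which the statement is false), but it follows a genuinely different route from the paper's. The paper does not prove Lemma~\ref{lem:Ser1} directly; it deduces it from Lemma~\ref{lem:alg grp}, whose proof inducts on the level $k$, writes each trace-zero matrix $U$ as a sum of four square-zero matrices (Lemma~\ref{Square}), and produces the kernel element $I+\ell^{k-1}U$ as the $\ell$-th power $h^{\ell}$ of a lift $h\equiv I+\ell^{k-2}U$ supplied by the inductive hypothesis --- the square-zero decomposition is precisely what makes $(I+\ell^{k-2}N)^{\ell}\equiv I+\ell^{k-1}N$ work. You instead identify the kernel $K$ of $\mathrm{SL}_2(\mathbb{Z}/\ell^{r}\mathbb{Z})\to\mathrm{SL}_2(\mathbb{Z}/\ell^{r-1}\mathbb{Z})$ with the adjoint module $\mathfrak{sl}_2(\mathbb{F}_\ell)$, use its irreducibility to force $\mr{H}\cap K\in\{0,K\}$, and eliminate the zero case by showing that every lift of the standard unipotent has order $\ell^{r}$, so the reduction map admits no section. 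The paper's route is what permits the generalization to arbitrary algebraic subgroups of $\mathrm{SL}_n$, where no irreducibility is available; yours is self-contained for $\mathrm{SL}_2$ (no appeal to the four-square-zero theorem) and makes the role of the hypothesis on $\ell$ transparent. Two small corrections: the divisibility of $\sum_{a+b=\ell^{r-1}-1}ab$ by $\ell$ requires $\ell\nmid 6$, not merely $\ell$ odd (it fails for $\ell=3$); and the lemma's failure at $\ell=3$ is located in exactly this order computation rather than in the irreducibility of $\mathfrak{sl}_2(\mathbb{F}_3)$, which in fact still holds there.
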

Later, we shall prove this generalization in Section~\ref{sec:proofs}. Now the following result helps in computing the Occ of the finite groups.

\begin{corollary} \label{Coj Occ}
Let $\ell>5$ be any prime, and $\mr{G}$ be any arbitrary subgroup of $\mathrm{GL}_{2}(\mathbb{Z}/\ell^{r}\mathbb{Z})$. Then the following holds.
\[\mathrm{PSL}_{2}(\mathbb{Z}/\ell\mathbb{Z}) \in \mathrm{Occ}(\mr{G})~\text{if and only if}~\mathrm{SL}_{2}(\mathbb{Z}/\ell^{r}\mathbb{Z}) \subseteq \mr{G}.\]

\end{corollary}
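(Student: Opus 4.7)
\medskip

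\noindent\textbf{Proof proposal for Corollary \ref{Coj Occ}.}

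The plan is to pass from $\mr{G}$ to a suitable subgroup of $\mathrm{SL}_2(\mathbb{Z}/\ell^r\mathbb{Z})$, then to its reduction mod $\ell$, and invoke Lemma \ref{lem:Ser1}. For the easy direction $(\Leftarrow)$, since $\mathrm{SL}_2(\mathbb{Z}/\ell^r\mathbb{Z})$ is normal in $\mathrm{GL}_2(\mathbb{Z}/\ell^r\mathbb{Z})$, it is normal in $\mr{G}$. The reduction map $\mathrm{SL}_2(\mathbb{Z}/\ell^r\mathbb{Z})\to \mathrm{SL}_2(\mathbb{Z}/\ell\mathbb{Z})$ has an $\ell$-group (hence solvable) kernel, and the further quotient by $\{\pm I\}$ yields $\mathrm{PSL}_2(\mathbb{Z}/\ell\mathbb{Z})$, which is non-abelian simple since $\ell>5$. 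Two applications of Lemma \ref{Ser Occ}(2) therefore place $\mathrm{PSL}_2(\mathbb{Z}/\ell\mathbb{Z})$ in $\mathrm{Occ}(\mathrm{SL}_2(\mathbb{Z}/\ell^r\mathbb{Z}))\subseteq \mathrm{Occ}(\mr{G})$.

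For $(\Rightarrow)$, set $\mr{G}_0=\mr{G}\cap \mathrm{SL}_2(\mathbb{Z}/\ell^r\mathbb{Z})$. The quotient $\mr{G}/\mr{G}_0$ embeds into $(\mathbb{Z}/\ell^r\mathbb{Z})^\times$ via the determinant, so it is abelian, and Lemma \ref{Ser Occ}(2) gives $\mathrm{Occ}(\mr{G})=\mathrm{Occ}(\mr{G}_0)$. Next, the kernel of $\mathrm{pr}_\ell$ restricted to $\mr{G}_0$ is contained in the principal congruence subgroup, which is an $\ell$-group and therefore solvable. Lemma \ref{Ser Occ}(2) again yields $\mathrm{Occ}(\mr{G}_0)=\mathrm{Occ}(\mathrm{pr}_\ell(\mr{G}_0))$, so our hypothesis becomes
\[
\mathrm{PSL}_2(\mathbb{Z}/\ell\mathbb{Z}) \in \mathrm{Occ}(\mathrm{pr}_\ell(\mr{G}_0)),\qquad \mathrm{pr}_\ell(\mr{G}_0)\subseteq \mathrm{SL}_2(\mathbb{Z}/\ell\mathbb{Z}).
\]

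The heart of the argument is now identifying $\mathrm{pr}_\ell(\mr{G}_0)$ with all of $\mathrm{SL}_2(\mathbb{Z}/\ell\mathbb{Z})$. I would invoke Dickson's classification of subgroups of $\mathrm{PSL}_2(\mathbb{F}_\ell)$: every proper subgroup is cyclic, dihedral, isomorphic to $A_4$, $S_4$ or $A_5$, or contained in a Borel. All of these are either solvable or have $A_5$ as their unique non-abelian simple composition factor. Since $\ell>5$, the group $\mathrm{PSL}_2(\mathbb{F}_\ell)$ is simple of order strictly larger than $|A_5|$, so it is not among these options. Hence the image of $\mathrm{pr}_\ell(\mr{G}_0)$ in $\mathrm{PSL}_2(\mathbb{F}_\ell)$ must be the whole simple group. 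Lifting back, $\mathrm{pr}_\ell(\mr{G}_0)\cdot \{\pm I\}=\mathrm{SL}_2(\mathbb{F}_\ell)$; since $\mathrm{SL}_2(\mathbb{F}_\ell)$ is perfect for $\ell\geq 5$ it admits no index-$2$ subgroup, forcing $\mathrm{pr}_\ell(\mr{G}_0)=\mathrm{SL}_2(\mathbb{F}_\ell)$. Lemma \ref{lem:Ser1} then upgrades this to $\mr{G}_0=\mathrm{SL}_2(\mathbb{Z}/\ell^r\mathbb{Z})$, which is the desired inclusion.

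The main obstacle I anticipate is the subgroup-theoretic step, which rests on Dickson's list and the perfectness of $\mathrm{SL}_2(\mathbb{F}_\ell)$. Everything else is a clean bookkeeping of composition factors via the two reductions (mod $\ell^r$ and then mod $\ell$) and the vanishing of $\mathrm{Occ}$ on solvable kernels.
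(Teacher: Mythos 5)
Your proof is correct. The paper itself states Corollary~\ref{Coj Occ} without proof (deferring to Cojocaru and Serre), and your argument is exactly the intended one: strip off the abelian determinant quotient and the solvable congruence kernel so that the hypothesis descends to $\mathrm{pr}_\ell(\mr{G}_0)\subseteq \mathrm{SL}_2(\mathbb{F}_\ell)$, use Dickson's classification together with the perfectness of $\mathrm{SL}_2(\mathbb{F}_\ell)$ to force surjectivity mod $\ell$, and then lift via Lemma~\ref{lem:Ser1}.
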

Combining all these results, we have the following.
\begin{corollary} \label{Coj Occ 2} Let $m$ be any integer co-prime to $30,$ and $G$ be any subgroup of $\mathrm{GL}_2(\mathbb{Z}/m\mathbb{Z}).$ Then the following holds.  
$$\mathrm{SL}_{2}(\mathbb{Z}/m\mathbb{Z}) \subseteq \mr{G} ~\mr{if~and~only~if}~\mathrm{PSL}_{2}(\mathbb{Z}/\ell\mathbb{Z}) \in \mathrm{Occ}(\mr{G}),$$ for every prime $\ell|m.$
\end{corollary}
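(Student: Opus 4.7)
The plan is to combine Corollary~\ref{Coj Occ}, which handles a single prime power, with a Chinese Remainder / Goursat argument to assemble the prime-power information into a statement about $m$.

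For the forward implication, assume $\mathrm{SL}_2(\mathbb{Z}/m\mathbb{Z}) \subseteq \mr{G}$ and pick any $\ell \mid m$ with $\ell^r \| m$. Reducing modulo $\ell$ gives a surjection $\mathrm{SL}_2(\mathbb{Z}/m\mathbb{Z}) \twoheadrightarrow \mathrm{SL}_2(\mathbb{Z}/\ell\mathbb{Z})$, and passing to the quotient by $\{\pm I\}$ we obtain $\mathrm{PSL}_2(\mathbb{Z}/\ell\mathbb{Z})$ as a quotient of a subgroup of $\mr{G}$. Since $(m,30)=1$ forces $\ell \geq 7$, the group $\mathrm{PSL}_2(\mathbb{Z}/\ell\mathbb{Z})$ is simple non-abelian, so it lies in $\mathrm{Occ}(\mr{G})$.

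For the converse I would induct on $\omega(m)$. The base case $\omega(m)=1$ is Corollary~\ref{Coj Occ} applied directly. For the inductive step, factor $m = \ell^r m'$ with $(\ell, m')=1$ and use the CRT isomorphism $\mathrm{GL}_2(\mathbb{Z}/m\mathbb{Z}) \cong \mathrm{GL}_2(\mathbb{Z}/\ell^r\mathbb{Z}) \times \mathrm{GL}_2(\mathbb{Z}/m'\mathbb{Z})$. Let $\mr{G}_{\ell}$ and $\mr{G}_{m'}$ denote the two projections of $\mr{G}$. The hypothesis on $\mathrm{Occ}$ descends to each projection; together with Corollary~\ref{Coj Occ} this yields $\mathrm{SL}_2(\mathbb{Z}/\ell^r\mathbb{Z}) \subseteq \mr{G}_{\ell}$, while the inductive hypothesis gives $\mathrm{SL}_2(\mathbb{Z}/m'\mathbb{Z}) \subseteq \mr{G}_{m'}$.

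The heart of the proof is then a Goursat patching. The group $\mr{G}$ sits inside $\mr{G}_{\ell} \times \mr{G}_{m'}$ and surjects onto each factor, so Goursat's lemma produces normal subgroups $\mr{N}_{\ell} \triangleleft \mr{G}_{\ell}$ and $\mr{N}_{m'} \triangleleft \mr{G}_{m'}$ with $\mr{G}_{\ell}/\mr{N}_{\ell} \cong \mr{G}_{m'}/\mr{N}_{m'}$ and $\mr{N}_{\ell} \times \mr{N}_{m'} \subseteq \mr{G}$. By Lemma~\ref{Ser Occ 2}, the non-abelian simple composition factors of $\mr{G}_{\ell}$ are copies of $\mathrm{PSL}_2(\mathbb{Z}/\ell\mathbb{Z})$, while those of $\mr{G}_{m'}$ are $\mathrm{PSL}_2(\mathbb{Z}/p\mathbb{Z})$ for primes $p \mid m'$; comparing orders $|\mathrm{PSL}_2(\mathbb{F}_p)|=p(p^2-1)/2$ shows these are pairwise non-isomorphic across distinct primes. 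Hence the common quotient $\mr{G}_{\ell}/\mr{N}_{\ell}$ has no non-abelian simple composition factor, i.e.\ it is solvable. Since $\mathrm{SL}_2(\mathbb{Z}/\ell^r\mathbb{Z})$ and $\mathrm{SL}_2(\mathbb{Z}/m'\mathbb{Z})$ are perfect (using $(m,30)=1$), their images in the solvable common quotient must be trivial, so $\mathrm{SL}_2(\mathbb{Z}/\ell^r\mathbb{Z}) \subseteq \mr{N}_{\ell}$ and $\mathrm{SL}_2(\mathbb{Z}/m'\mathbb{Z}) \subseteq \mr{N}_{m'}$. Under the CRT identification this gives $\mathrm{SL}_2(\mathbb{Z}/m\mathbb{Z}) \subseteq \mr{N}_{\ell} \times \mr{N}_{m'} \subseteq \mr{G}$, closing the induction.

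The main obstacle I expect is precisely this Goursat bookkeeping: verifying that the common quotient is solvable, and then that the $\mathrm{SL}_2$ factors land inside the Goursat kernels. This is where the coprimality condition $(m,30)=1$ enters three times — to ensure $\mathrm{PSL}_2(\mathbb{F}_\ell)$ is non-abelian simple, that these simple groups are pairwise non-isomorphic across distinct $\ell \mid m$, and that each $\mathrm{SL}_2(\mathbb{Z}/\ell^r\mathbb{Z})$ is perfect.
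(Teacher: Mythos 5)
Your argument is correct, but it routes through more machinery than is needed. The paper's intended argument (the $\mathrm{GSp}_{2g}$ analogue of which appears explicitly in the proof of Theorem~\ref{thm:abv version}) works with the kernels $G^{(\ell)} := G \cap \ker\pi_{m/\ell^r}$, viewed inside $\mathrm{GL}_2(\mathbb{Z}/\ell^r\mathbb{Z})$, rather than with the images $G_\ell$. Lemma~\ref{Ser Occ}(2) gives $\mathrm{Occ}(G) = \mathrm{Occ}(G^{(\ell)}) \cup \mathrm{Occ}(G/G^{(\ell)})$; since $G/G^{(\ell)}$ embeds in $\mathrm{GL}_2(\mathbb{Z}/(m/\ell^r)\mathbb{Z})$, the factor $\mathrm{PSL}_2(\mathbb{F}_\ell)$ must lie in $\mathrm{Occ}(G^{(\ell)})$, and Corollary~\ref{Coj Occ} then yields $\mathrm{SL}_2(\mathbb{Z}/\ell^r\mathbb{Z}) \subseteq G^{(\ell)} \subseteq G$. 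As this holds for every $\ell\mid m$, multiplying these subgroups inside $G$ gives $\mathrm{SL}_2(\mathbb{Z}/m\mathbb{Z}) \subseteq G$ directly, with no induction on $\omega(m)$ and no Goursat. Your version, by working with the images $G_\ell$, only realizes each $\mathrm{SL}_2(\mathbb{Z}/\ell^r\mathbb{Z})$ as a quotient of a piece of $G$ and therefore has to glue the prime components back together, which is the whole reason Goursat enters; the kernel-based route avoids the gluing entirely.

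Two smaller points on the details. The step you describe as ``the hypothesis on $\mathrm{Occ}$ descends to each projection'' is the one genuinely nontrivial input shared by both proofs, and it deserves more than one sentence: $\mathrm{Occ}$ is not monotone under passing to arbitrary subgroups (e.g.\ $A_6\leq A_7$ but $A_6\notin\mathrm{Occ}(A_7)$), so Lemma~\ref{Ser Occ 2} alone does not give $\mathrm{Occ}(H)\subseteq\{\mathrm{PSL}_2(\mathbb{F}_p):p\mid n\}$ for an arbitrary subgroup $H\leq\mathrm{GL}_2(\mathbb{Z}/n\mathbb{Z})$. One needs Dickson's classification of subgroups of $\mathrm{GL}_2(\mathbb{F}_p)$ together with the solvability of the reduction kernels, and the hypothesis $(m,30)=1$ precisely so that $A_5\cong\mathrm{PSL}_2(\mathbb{F}_5)$ cannot masquerade as one of the relevant factors. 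For the same reason your citation of Lemma~\ref{Ser Occ 2} to conclude $\mathrm{Occ}(G_\ell)\subseteq\{\mathrm{PSL}_2(\mathbb{F}_\ell)\}$ is slightly off as stated; the clean justification at that stage is that you have already established $\mathrm{SL}_2(\mathbb{Z}/\ell^r\mathbb{Z})\trianglelefteq G_\ell$ with abelian quotient, so Lemma~\ref{Ser Occ}(2) gives $\mathrm{Occ}(G_\ell)=\mathrm{Occ}(\mathrm{SL}_2(\mathbb{Z}/\ell^r\mathbb{Z}))=\{\mathrm{PSL}_2(\mathbb{F}_\ell)\}$, after which your Goursat argument does close.
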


\subsection{Generalizations to the Symplectic groups}
Let us start by recalling the standard definition of Symplectic groups. Consider the groups, $$\mathrm{GSp}_{2g}(\mathbb{Z}/m\mathbb{Z})= \Big\{\mr{g} \in \mathrm{GL}_{2g}(\mathbb{Z}/m\mathbb{Z}) \mid \mr{g} \begin{pmatrix}
0 & I  \\
-I & 0 \\

\end{pmatrix}\mr{g}^{t}=\begin{pmatrix}
0 & \lambda  \\
-\lambda & 0 \\

\end{pmatrix}, \lambda \in (\mathbb{Z}/m\mathbb{Z})^{*} \Big\}$$ and $$\mathrm{Sp}_{2g}(\mathbb{Z}/m\mathbb{Z})= \Big\{\mr{g} \in \mathrm{GL}_{2g}(\mathbb{Z}/m\mathbb{Z}) \mid \mr{g} \begin{pmatrix}
0 & I  \\
-I & 0 \\

\end{pmatrix}\mr{g}^{t}=\begin{pmatrix}
0 & I  \\
-I & 0 \\

\end{pmatrix}\Big\}.$$ 
 We then have the following fact when we want to compute Occ.
\begin{lemma} \label{Occ Sp}
For any integer $m$, we have the following.
\[\mathrm{Occ}\Big(\mathrm{GSp}_{2g}(\mathbb{Z}/m\mathbb{Z})\Big)=\bigcup_{\substack{\ell~\mr{prime}\\\ell\mid m}} \mathrm{Occ}\Big(\mathrm{PSp}_{2g}(\mathbb{Z}/\ell\mathbb{Z})\Big).\]

\end{lemma}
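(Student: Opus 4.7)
The plan is to adapt the proof of Lemma~\ref{Ser Occ 2} with $\mathrm{GL}_2$ replaced by $\mathrm{GSp}_{2g}$. By the Chinese Remainder Theorem we have an isomorphism $\mathrm{GSp}_{2g}(\mathbb{Z}/m\mathbb{Z}) \cong \prod_{\ell^r || m} \mathrm{GSp}_{2g}(\mathbb{Z}/\ell^r\mathbb{Z})$, and applying part (2) of Lemma~\ref{Ser Occ} inductively to the factors of this direct product (each factor being normal in the product) reduces the problem to establishing, for each prime power $\ell^r || m$, the identity
\[\mathrm{Occ}(\mathrm{GSp}_{2g}(\mathbb{Z}/\ell^r\mathbb{Z})) = \mathrm{Occ}(\mathrm{PSp}_{2g}(\mathbb{Z}/\ell\mathbb{Z})).\]

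The first step is to collapse the exponent $r$ down to $1$. I would consider the reduction homomorphism $\pi : \mathrm{GSp}_{2g}(\mathbb{Z}/\ell^r\mathbb{Z}) \to \mathrm{GSp}_{2g}(\mathbb{Z}/\ell\mathbb{Z})$, which is surjective as $\mathrm{GSp}_{2g}$ is a smooth affine group scheme, and whose kernel $N$ is the principal congruence subgroup of level $\ell$. A direct computation with elements of the form $I + \ell M$, inserted into the defining symplectic relation, shows that $N$ admits a central series whose successive quotients are abelian $\ell$-groups; hence $N$ is a finite $\ell$-group, in particular solvable. By part (1) of Lemma~\ref{Ser Occ} we get $\mathrm{Occ}(N) = \emptyset$, and part (2) applied to the short exact sequence $1 \to N \to \mathrm{GSp}_{2g}(\mathbb{Z}/\ell^r\mathbb{Z}) \to \mathrm{GSp}_{2g}(\mathbb{Z}/\ell\mathbb{Z}) \to 1$ then yields $\mathrm{Occ}(\mathrm{GSp}_{2g}(\mathbb{Z}/\ell^r\mathbb{Z})) = \mathrm{Occ}(\mathrm{GSp}_{2g}(\mathbb{Z}/\ell\mathbb{Z}))$.

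The second step is to pass from $\mathrm{GSp}$ down to $\mathrm{PSp}$ over $\mathbb{F}_\ell$. The similitude character $\lambda$ fits into a short exact sequence $1 \to \mathrm{Sp}_{2g}(\mathbb{Z}/\ell\mathbb{Z}) \to \mathrm{GSp}_{2g}(\mathbb{Z}/\ell\mathbb{Z}) \to (\mathbb{Z}/\ell\mathbb{Z})^* \to 1$ with abelian (hence solvable) quotient, and the scalars $\{\pm I\}$ form a central subgroup of $\mathrm{Sp}_{2g}(\mathbb{Z}/\ell\mathbb{Z})$ with quotient $\mathrm{PSp}_{2g}(\mathbb{Z}/\ell\mathbb{Z})$. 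Two further invocations of Lemma~\ref{Ser Occ} absorb these abelian layers, whose Occ is empty, to give the displayed identity. Combining all the reductions yields the statement of the lemma.

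The main obstacle I anticipate is verifying that the reduction kernel $N$ is truly an $\ell$-group inside the symplectic setting; this requires a short but careful matrix computation, unpacking the identity $(I + \ell^j M) J (I + \ell^j M)^t = \lambda J$ to first order in $\ell^j$, to show that elements congruent to the identity modulo $\ell^j$ have $\ell$-power order. Once this structural fact is in place, the remainder is purely formal Jordan-H\"older bookkeeping through the two short exact sequences above.
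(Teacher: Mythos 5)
Your proof is correct and follows essentially the same strategy as the paper's: both rely on parts (1) and (2) of Lemma~\ref{Ser Occ}, absorbing the abelian similitude quotient, the $\ell$-group congruence kernel, and the central $\{\pm I\}$ via the ``solvable layers have empty Occ'' principle. The only difference is cosmetic — you apply CRT and collapse the exponent before passing from $\mathrm{GSp}$ to $\mathrm{Sp}$, whereas the paper quotients out the similitude character first and then applies CRT to $\mathrm{Sp}_{2g}(\mathbb{Z}/m\mathbb{Z})$; the underlying reductions are identical.
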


\begin{proof} Since $\text{GSp}_{2g}(\mathbb{Z}/m\mathbb{Z})/\text{Sp}_{2g}(\mathbb{Z}/m\mathbb{Z})$ is abelian, 
\[\text{Occ}\Big(\mathrm{GSp}_{2g}(\mathbb{Z}/m\mathbb{Z})\Big)=\text{Occ}\Big(\mathrm{Sp}_{2g}(\mathbb{Z}/m\mathbb{Z})\Big)=\bigcup_{\ell^r||m}\text{Occ}\Big(\mathrm{Sp}_{2g}(\mathbb{Z}/\ell^r\mathbb{Z})\Big).\]
The last equality above follows from $(2)$ of Lemma~\ref{Ser Occ}. On the other hand,
\[\mathrm{Occ}\Big(\mathrm{Sp}_{2g}(\mathbb{Z}/\ell^r\mathbb{Z})\Big)=\mathrm{Occ}\Big(\mathrm{Sp}_{2g}(\mathbb{Z}/\ell\mathbb{Z})\Big)=\mathrm{Occ}\Big(\mathrm{PSp}_{2g}(\mathbb{Z}/\ell\mathbb{Z})\Big)\]
since kernel of the projection map $\mathrm{Sp}_{2g}(\mathbb{Z}/\ell^r\mathbb{Z}) \rightarrow \mathrm{Sp}_{2g}(\mathbb{Z}/\ell\mathbb{Z}),$ is a $\ell$-group, hence solvable, and $\text{Occ}$ of the solvable part is empty.
\end{proof}
As an immediate consequence
\begin{corollary} \label{Occ Sp 2} For any prime $\ell>5$ not dividing $m,$
\[\mathrm{PSp}_{2g}(\mathbb{Z}/\ell\mathbb{Z}) \not\in \mathrm{Occ}\Big(\mathrm{GSp}_{2g}(\mathbb{Z}/m\mathbb{Z})\Big).\]
\end{corollary}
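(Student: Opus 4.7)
The plan is to read the conclusion off Lemma~\ref{Occ Sp} almost for free. That lemma gives
\[
\mathrm{Occ}\bigl(\mathrm{GSp}_{2g}(\mathbb{Z}/m\mathbb{Z})\bigr)=\bigcup_{\substack{\ell'~\mr{prime}\\ \ell'\mid m}}\mathrm{Occ}\bigl(\mathrm{PSp}_{2g}(\mathbb{F}_{\ell'})\bigr),
\]
so it suffices to show that for every prime $\ell'\mid m$, the group $\mathrm{PSp}_{2g}(\mathbb{F}_\ell)$ fails to be a composition factor of $\mathrm{PSp}_{2g}(\mathbb{F}_{\ell'})$. The hypothesis $\ell\nmid m$ forces $\ell\neq\ell'$, so we are genuinely comparing two different primes.

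Outside the short exceptional list $(g,\ell')\in\{(1,2),(1,3),(2,2)\}$, the group $\mathrm{PSp}_{2g}(\mathbb{F}_{\ell'})$ is itself a non-abelian finite simple group, so $\mathrm{Occ}\bigl(\mathrm{PSp}_{2g}(\mathbb{F}_{\ell'})\bigr)$ is the single isomorphism class $\{\mathrm{PSp}_{2g}(\mathbb{F}_{\ell'})\}$, and the task reduces to ruling out $\mathrm{PSp}_{2g}(\mathbb{F}_\ell)\cong\mathrm{PSp}_{2g}(\mathbb{F}_{\ell'})$. I would do this by a direct order comparison: for odd primes $q$ the standard Lie-type formula
\[
|\mathrm{PSp}_{2g}(\mathbb{F}_q)|=\tfrac{1}{2}\,q^{g^2}\prod_{i=1}^{g}(q^{2i}-1)
\]
is a strictly monotonic polynomial in $q\geq 2$, so different primes produce different orders and hence non-isomorphic groups.

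For the residual small-prime cases in the exceptional list (for example the non-simple $\mathrm{PSp}_4(\mathbb{F}_2)\cong S_6$, whose only non-abelian simple composition factor is $A_6$), I would instead enumerate the composition factors of $\mathrm{PSp}_{2g}(\mathbb{F}_{\ell'})$ explicitly; each has order bounded by an absolute constant depending only on the exceptional pair, while $|\mathrm{PSp}_{2g}(\mathbb{F}_\ell)|$ already exceeds any such constant once $\ell\geq 7$. The main obstacle I anticipate is purely organizational: cleanly handling the handful of small-$q$ exceptions so that no accidental isomorphism between $\mathrm{PSp}_{2g}(\mathbb{F}_\ell)$ and one of those listed composition factors is overlooked. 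Once that bookkeeping is settled, the order-monotonicity step closes the proof in a line.
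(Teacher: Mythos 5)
Your proof is correct and reaches the right conclusion, but it replaces the paper's mechanism with a genuinely different one. Both arguments begin the same way: feed Lemma~\ref{Occ Sp} in, reducing the claim to showing that $\mathrm{PSp}_{2g}(\mathbb{F}_\ell)\not\in\mathrm{Occ}\bigl(\mathrm{PSp}_{2g}(\mathbb{F}_{\ell'})\bigr)$ for every prime $\ell'\mid m$, where necessarily $\ell'\neq\ell$. The paper then cites Lemma~6.1 of Jones \cite{Jones15}, whose content is a Sylow-theoretic comparison: the $\ell$-Sylow subgroup of $\mathrm{PSp}_{2g}(\mathbb{F}_\ell)$ is too large (of order at least $\ell^{g^2}$) to embed in a composition factor of $\mathrm{PSp}_{2g}(\mathbb{F}_{\ell'})$, whose $\ell$-part is controlled by $v_\ell\bigl(\prod_i(\ell'^{2i}-1)\bigr)$. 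That argument is uniform in $g$ and $\ell'$ and never needs to know which $\mathrm{PSp}_{2g}(\mathbb{F}_{\ell'})$ are simple. Your route instead invokes simplicity of $\mathrm{PSp}_{2g}(\mathbb{F}_{\ell'})$ outside the list $(g,\ell')\in\{(1,2),(1,3),(2,2)\}$, collapses $\mathrm{Occ}$ to a singleton, and distinguishes the two simple groups by an order count, with the three non-simple cases handled ad hoc. This is more elementary (no Sylow, only the order polynomial) but requires the classification-type input that $\mathrm{PSp}_{2g}(\mathbb{F}_q)$ is simple away from those three pairs, plus bookkeeping of the exceptions. Two small points to tidy if you write it up: the formula $\tfrac12 q^{g^2}\prod_{i=1}^g(q^{2i}-1)$ is valid for odd $q$; for $\ell'=2$ with $g\geq 3$ (which is simple and outside your exceptional list) you need the $\gcd(2,q-1)$ version, though the monotonicity conclusion survives. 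And since two non-isomorphic finite simple groups can share an order (e.g.\ $A_8$ and $\mathrm{PSL}_3(\mathbb{F}_4)$), it is worth stating explicitly that you are only using the trivial direction --- different orders force non-isomorphism --- which is what your strict monotonicity in $q$ provides.
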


\begin{proof} It follows immediately from Lemma \ref{Occ Sp} and Lemma 6.1 of \cite{Jones15}. Basically, Sylow's theorem shows that $\mathrm{PSp}_{2g}(\mathbb{Z}/\ell\mathbb{Z}) \not\in \mathrm{Occ}\Big(\text{PSp}_{2g}(\mathbb{Z}/\ell'\mathbb{Z})\Big),$
for any primes $\ell \neq \ell'.$
\end{proof}
Throughout the article, let us denote $\mr{comm}(G)$ to be the commutator of $G$ for any finite group $G.$ We then have,
\begin{lemma} \label{lem:comm}For any $m \in \mathbb{N}$ and $g>1,$
\[\mr{comm}(\mathrm{Sp}_{2g}(\mathbb{Z}/m\mathbb{Z}))=\mathrm{Sp}_{2g}(\mathbb{Z}/m\mathbb{Z}).\]

\end{lemma}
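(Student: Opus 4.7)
The plan is to reduce to prime-power moduli via the Chinese Remainder Theorem, and then to proceed by induction on the $\ell$-adic exponent, using the filtration of $\mathrm{Sp}_{2g}(\mathbb{Z}/\ell^r\mathbb{Z})$ by principal congruence subgroups.

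First, CRT gives $\mathrm{Sp}_{2g}(\mathbb{Z}/m\mathbb{Z}) \cong \prod_{\ell^r \| m} \mathrm{Sp}_{2g}(\mathbb{Z}/\ell^r\mathbb{Z})$, and commutator subgroups respect direct products, so one reduces to showing the claim for $m = \ell^r$. The base case $r = 1$ invokes the classical fact that $\mathrm{Sp}_{2g}(\mathbb{F}_\ell)$ is perfect whenever $g \geq 2$ and $(g,\ell) \neq (2,2)$: symplectic transvections generate the group, and in the presence of a second hyperbolic plane independent of the transvection's support (which exists precisely when $g \geq 2$) each transvection is visibly a commutator.

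For the inductive step, I would set $G_r := \mathrm{Sp}_{2g}(\mathbb{Z}/\ell^r\mathbb{Z})$ and let $N_r := \ker(G_r \to G_{r-1})$. For $r \geq 2$, a direct check with the defining equation $g^T J g = J$ shows that the assignment $I + \ell^{r-1}X \mapsto X \bmod \ell$ identifies $N_r$ with the additive group of the symplectic Lie algebra $\mathfrak{sp}_{2g}(\mathbb{F}_\ell)$; in particular $N_r$ is elementary abelian, and conjugation by $h \in G_r$ on $N_r$ factors through the reduction $\bar h \in G_1$ acting via the adjoint representation. The inductive hypothesis gives $[G_{r-1}, G_{r-1}] = G_{r-1}$, so $[G_r, G_r]$ already surjects onto $G_{r-1}$, and it remains only to show $N_r \subseteq [G_r, G_r]$. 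The short computation
$$[h,\, I + \ell^{r-1}X] \equiv I + \ell^{r-1}\bigl(\bar h X \bar h^{-1} - X\bigr) \pmod{\ell^r}$$
exhibits $[G_r, G_r] \cap N_r$ as containing the $\mathbb{F}_\ell$-span of all coboundaries $\bar h X \bar h^{-1} - X$. Since the adjoint representation of $\mathrm{Sp}_{2g}(\mathbb{F}_\ell)$ on its Lie algebra is irreducible in good characteristic and acts non-trivially, this span must equal the whole of $\mathfrak{sp}_{2g}(\mathbb{F}_\ell)$, which closes the induction.

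The hard part will be the small-prime cases. The base step is literally false at $(g,\ell) = (2,2)$, since $\mathrm{Sp}_4(\mathbb{F}_2) \cong S_6$ has abelianization of order $2$, and irreducibility of the adjoint representation can also fail in characteristic $2$. Since every subsequent use of this lemma takes $m$ coprime to $30$, in which range $\ell \geq 7$, these pathologies do not arise and the induction goes through uniformly; if one wants the result for all $m$, the exceptional quotient at $\ell = 2$ would need to be excluded from the statement or handled by an ad hoc argument.
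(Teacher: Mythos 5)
Your proof is correct (modulo the standard facts you invoke) but follows a genuinely different route from the paper's. The paper also reduces to prime powers by CRT, but then handles the passage from $\ell^r$ to $\ell$ by appealing to its Lemma~\ref{lem:alg grp} (the Serre-type lifting lemma for algebraic subgroups of $\mathrm{SL}_n$, proved via sums of square-zero matrices and $p$-th powers), so that only perfectness of $\mathrm{Sp}_{2g}(\mathbb{F}_\ell)$ remains; that last step it derives from simplicity of $\mathrm{PSp}_{2g}(\mathbb{F}_\ell)$, followed by a small argument with $J^2=-I$ to descend from $\pm\mathrm{Sp}_{2g}(\mathbb{F}_\ell)$ to $\mathrm{Sp}_{2g}(\mathbb{F}_\ell)$. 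Your congruence-filtration induction replaces that black box with a self-contained computation, at the price of needing irreducibility of the adjoint module $\mathfrak{sp}_{2g}(\mathbb{F}_\ell)$ (fine for $\ell\geq 7$, which is all the paper ever uses); the paper's route avoids any representation-theoretic input but leans on a lemma that is only proved later in the text. Your observation about $(g,\ell)=(2,2)$ is a genuine correction rather than a cosmetic one: the lemma is stated for all $m\in\mathbb{N}$, and the paper's proof asserts that $\mathrm{PSp}_{2g}(\mathbb{F}_\ell)$ is simple and non-abelian \emph{for any prime $\ell$ and $g>1$}, which fails for $\mathrm{Sp}_4(\mathbb{F}_2)\cong S_6$; the statement should carry the restriction $2\nmid m$ (or $(g,\ell)\neq(2,2)$), harmless for the applications since $m$ is always coprime to $30$.
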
 

\begin{proof} Let us first note that $\mathrm{Sp}_{2g}(\mathbb{Z}/m\mathbb{Z})=\prod_{\ell^r||m} \mathrm{Sp}_{2g}(\mathbb{Z}/\ell^r\mathbb{Z}).$ It is enough to prove the statement when $m$ is the power of a prime. After using Lemma~\ref{lem:alg grp}, which we shall prove in the next section, it is enough to show that $\mr{comm}(\mathrm{Sp}_{2g}(\mathbb{Z}/\ell \mathbb{Z}))=\mathrm{Sp}_{2g}(\mathbb{Z}/\ell\mathbb{Z}).$ Since $\mathrm{PSp}_{2g}(\mathbb{Z}/\ell \mathbb{Z})$ is simple and non-abelian, for any prime $\ell$ and $g>1$, we have that $\mr{comm}(\mathrm{PSp}_{2g}(\mathbb{Z}/\ell \mathbb{Z}))=\mathrm{PSp}_{2g}(\mathbb{Z}/\ell \mathbb{Z}).$
In particular, we have $$\mr{comm}(\pm \mathrm{Sp}_{2g}(\mathbb{Z}/\ell\mathbb{Z}))=\mathrm{Sp}_{2g}(\mathbb{Z}/\ell\mathbb{Z}).$$ We now need to show if $\mr{H}$ is a subgroup of $\mathrm{Sp}_{2g}(\mathbb{Z}/\ell\mathbb{Z})$ such that $\pm \mr{H}$ is the full group, then $\mr{H}$ itself is the full group $\text{Sp}_{2g}(\mathbb{Z}/\ell \mathbb{Z})$. One can see that the symplectic form $J=\begin{pmatrix}
 & I  \\
-I &  \\
\end{pmatrix}$ is in $\text{Sp}_{2g}(\mathbb{Z}/\ell\mathbb{Z})$. In particular, at least one of $J$ and $-J$ is in $\mr{H},$ because $\pm H= \text{Sp}_{2g}(\mathbb{Z}/\ell\mathbb{Z}).$ In either cases, we have $J^2=-I\in H.$ This concludes the proof, since 
$\pm H=H=\text{Sp}_{2g}(\mathbb{Z}/\ell \mathbb{Z}).$
\end{proof} 
\begin{lemma} \label{Square}
Let $A \in \mathrm{M}_{n}(\mathbb{Z}/\ell\mathbb{Z})$ be an arbitrary matrix with $\mathrm{tr}(A)=0$. Then $A$ is the sum of four square-zero matrices.
\end{lemma}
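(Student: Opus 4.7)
My plan is to decompose $A$ by first reducing it to a matrix with zero diagonal via conjugation, and then exploiting the triangular decomposition of the resulting matrix.

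First, by a classical matrix-theory theorem (going back to Shoda, and refined by Albert--Muckenhoupt and Fillmore), any trace-zero matrix over a field is similar to a matrix with all diagonal entries zero, provided $A$ is not a nonzero scalar multiple of the identity; the scalar exception can arise only when $\ell \mid n$ and I will treat it separately below. Since the property of being a sum of $k$ square-zero matrices is invariant under conjugation, this allows me to assume $A$ has zero diagonal in the generic case.

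Next, writing $A = L + U$ where $L$ and $U$ are respectively the strictly lower and strictly upper triangular parts of $A$, both summands are nilpotent in $\mathrm{M}_n(\mathbb{Z}/\ell\mathbb{Z})$. The key intermediate step is then to show that any nilpotent $N \in \mathrm{M}_n(\mathbb{Z}/\ell\mathbb{Z})$ can be written as a sum of two square-zero matrices. For this I would pass to Jordan normal form, $N = P\bigl(\bigoplus_i J_{k_i}\bigr)P^{-1}$, which exists over $\mathbb{F}_\ell$ since the only eigenvalue of $N$ is zero. For each Jordan block $J_k$ of size $k$ with ones on the superdiagonal, set
\[
A_1 = E_{1,2} + E_{3,4} + E_{5,6} + \cdots, \qquad A_2 = E_{2,3} + E_{4,5} + E_{6,7} + \cdots.
\]
Then $J_k = A_1 + A_2$ and $A_1^2 = A_2^2 = 0$, because every product of the form $E_{2i-1,2i}E_{2j-1,2j}$ vanishes (the column index of the first factor, being even, never matches the odd row index of the second), and similarly for $A_2$. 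Assembling these decompositions block by block and conjugating back by $P$, both $L$ and $U$ split as sums of two square-zero matrices, so $A$ emerges as a sum of four.

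For the exceptional case $A = \lambda I_n$ with $\lambda \in (\mathbb{Z}/\ell\mathbb{Z})^\ast$ and $\ell \mid n$, I would first reduce to the situation $n = \ell$ via the block-diagonal splitting $\lambda I_{\ell m} = \bigoplus_{j=1}^{m} \lambda I_\ell$, which preserves the number of square-zero summands. For $\lambda I_\ell$ itself, the critical observation is that the all-ones matrix $J_\ell$ satisfies $J_\ell^2 = \ell\, J_\ell = 0$ in characteristic $\ell$, so $\lambda J_\ell$ is itself square-zero; one then combines this with a direct combinatorial decomposition of the zero-diagonal matrix $\lambda(I_\ell - J_\ell)$ into three further square-zero summands, exploiting the cancellation $\ell \cdot 1 = 0$ to keep the total count at four.

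The hardest part of the argument will be the scalar case, since the Shoda--Fillmore similarity reduction fails for $\lambda I_n$ and one must provide an ad hoc construction tailored to characteristic $\ell$, rather than invoking Jordan form directly. Everything outside this case is a routine application of triangular splitting combined with the Jordan-block trick above.
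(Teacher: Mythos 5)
The paper itself does not prove this lemma---it simply cites Theorem~1.1 of \cite{Paz}---so any self-contained argument is by nature a different route. Your treatment of the non-scalar case is correct and complete: Fillmore's theorem holds over an arbitrary field, splitting a zero-diagonal matrix into its strictly lower and strictly upper triangular parts yields two nilpotent summands, and the alternating decomposition $J_k=(E_{1,2}+E_{3,4}+\cdots)+(E_{2,3}+E_{4,5}+\cdots)$ of a Jordan block does exhibit every nilpotent matrix as a sum of two square-zero matrices. That settles every trace-zero $A$ which is not a nonzero scalar.

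The genuine gap is exactly where you flag it: the scalar case $A=\lambda I_n$ with $\ell\mid n$ and $\lambda\neq 0$. You correctly note that the all-ones matrix satisfies $J_\ell^2=\ell J_\ell=0$, but your plan then requires $\lambda(I_\ell-J_\ell)$ to be a sum of \emph{three} square-zero matrices, and this is asserted, not proved; the only tool your method actually provides for a zero-diagonal matrix is the lower/upper triangular split, which gives \emph{four} square-zero summands and hence five in total. Nor is the three-term claim obvious: $\lambda(I_\ell-J_\ell)$ is $\lambda$ times a unipotent matrix (since $J_\ell$ is nilpotent over $\mathbb{F}_\ell$), so it is invertible with all eigenvalues equal to $\lambda$. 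The case can be repaired by splitting differently: for odd $\ell$ write $\lambda I_\ell=D+(\lambda I_\ell-D)$ with $D=\mathrm{diag}(0,1,\dots,\ell-1)$. Both summands are diagonal with spectrum equal to all of $\mathbb{Z}/\ell\mathbb{Z}$, hence each is similar to $(0)\oplus\bigoplus_{i=1}^{(\ell-1)/2}\mathrm{diag}(i,-i)$, and $\mathrm{diag}(\mu,-\mu)$ is the sum of the two square-zero matrices $\tfrac{1}{2}\begin{pmatrix}\mu&\mu\\-\mu&-\mu\end{pmatrix}$ and $\tfrac{1}{2}\begin{pmatrix}\mu&-\mu\\ \mu&-\mu\end{pmatrix}$; thus each summand is a sum of two square-zero matrices and the total stays at four. (For $\ell=2$ one checks directly that $I_2=E_{1,2}+E_{2,1}+\begin{pmatrix}1&1\\1&1\end{pmatrix}$.) As written, however, your proof is incomplete precisely in the case you yourself identify as the hard one.
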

\begin{proof} See Theorem 1.1 in \cite{Paz}.
\end{proof}

\section{Proof of the main statements}\label{sec:proofs} 

In this section, we shall prove the main results of this section. To begin with, let $E$ be an elliptic curve over any arbitrary number field $K$.

\subsection{Proof of Theorem~\ref{thm:nf version}}
\subsection{Proof of part (a)} Let $m$ be an integer co-prime to $30.$ If $\rho_{E,m}$ is surjective, then $\rho_{E,\ell}=\text{pr}_{m,\ell} \circ  \rho_{E,m}$ is surjective for any $\ell \mid m,$ where 
\[\mathrm{pr}_{m,\ell}: \mathrm{GL}_2(\mathbb{Z}/m\mathbb{Z}) \longrightarrow \mathrm{GL}_2(\mathbb{Z}/\ell\mathbb{Z}),\]
is the natural projection.

For the converse, it follows from the given hypothesis that $\text{im}(\rho_{E,\ell})=\text{GL}_{2}(\mathbb{Z}/\ell\mathbb{Z})$ is a quotient of $\text{im}(\rho_{E,m})$ for any prime $\ell|m$. In particular, 
$$\text{PSL}_{2}(\mathbb{Z}/\ell\mathbb{Z}) \in \text{Occ}(\mr{G})$$ for any prime $\ell|m$, where $\mr{G}=\text{im}(\rho_{E,m}).$ It follows from Corollary~\ref{Coj Occ 2} that, $\text{SL}_{2}(\mathbb{Z}/m\mathbb{Z})$ is contained in $\mr{G}$. We then have that $\text{SL}_{2}(\mathbb{Z}/m\mathbb{Z}) =\mr{G}',$ and in particular $[G:G'] \mid \phi(m).$ On the other hand, the Weil pairing gives $[K(\zeta_{m}):K] \mid [G:G'].$ It is now enough to ensure that $[K(\zeta_{m}):K]= \phi(m)$. Note that, $[K(\zeta_{m}):K]=[\mathbb{Q}(\zeta_m):K\cap \mathbb{Q}(\zeta_m)],$ and hence it is enough to ensure that $K\cap \mathbb{Q}(\zeta_m)=\mathbb{Q}.$ We shall see the imposed conditions on $m$ or $K$ gives us that privilege.\footnote{This is not true in general. For instance, one may consider $K=\mathbb{Q}(\sqrt{-15})$ and then we have $[K(\zeta_{15}):K]=4\neq \phi(15)$.} 

First of all since $K\cap \mathbb{Q}(\zeta_m)$ is an abelian extension of $\mathbb{Q}$ contained in $K,$ the first imposed condition on $K$ forces the intersection to be trivial. On the other hand, since $K\cap \mathbb{Q}(\zeta_m)$ is an extension of $\mathbb{Q}$ contained in both $K$ and $\mathbb{Q}(\zeta_m),$ it is evident that the condition $(\phi(m),D_K)=1$ implies $K\cap \mathbb{Q}(\zeta_m)=\mathbb{Q}.$ Moreover, the assumption $(m,d_{K})=1$ immediately implies that the discriminant of $K\cap \mathbb{Q}(\zeta_m)$ is only $1.$ In particular, in all the cases we have $K\cap \mathbb{Q}(\zeta_m)=\mathbb{Q},$ as desired. 

 \subsection{Proof of part (b)} Take any integer $m$ that is not square-free. Write $m=\prod_{i=1}^{d}\ell_i^{e_i}$ and without loss of generality let us assume that $e_1>1.$ Consider $F$ to be an extension of $\mathbb{Q}$ contained in $\mathbb{Q}(\zeta_{\ell_1^{e_1}})$ of degree $\ell_1.$ We can do that, because $e_1>1$ by the assumption. It is evident that, $F\cap \mathbb{Q}(\zeta_{\ell_1})=\mathbb{Q}.$ Now for any $i>1,$ we have $\mathbb{Q}(\zeta_{\ell_i})\cap \mathbb{Q}(\zeta_{\ell_1^{e_1}})=\mathbb{Q},$ and in particular, we have $F\cap \mathbb{Q}(\zeta_{\ell_1})=\mathbb{Q},~\forall 1\leq i\leq d.$ Hence, for any $1\leq i\leq d$ we have
\[[F(\zeta_{\ell_i}):F]=[\mathbb{Q}(\zeta_{\ell_i}): F \cap \mathbb{Q}(\zeta_{\ell_i})]=\ell_i-1,\]
where the last implication is true because $F\cap \mathbb{Q}(\zeta_{\ell_i})=\mathbb{Q}.$ Let us now denote $K_m=F,$ and show that the pair $(m,K_m)$ satisfies all the necessary conditions for $m$ to be a potentially bad number. First, we need to show that there exists at least one elliptic curve $E$ over $K_m,$ for which 
\[\text{im}(\rho_{E,m})\neq \text{GL}_2(\mathbb{Z}/m\mathbb{Z})~\mr{but}~\text{im}(\rho_{E,\ell_i})=\text{GL}_2(\mathbb{Z}/\ell_i\mathbb{Z}),~\forall~1\leq i\leq d.\]
We know from \cite{Zyw10} that, there exists at least one elliptic curve $E/K_m$ for which $\text{im}(\rho_{E,\ell_i})\supset \text{SL}_2(\mathbb{Z}/\ell_i\mathbb{Z}),~\forall~1\leq i\leq d.$ In fact, this holds for almost all elliptic curves over $K_m$. From the construction, we know that $[K_m(\zeta_{\ell_i}):K_m]=\ell_i-1,~\forall 1\leq i\leq d.$ Now it follows from the argument of part (a) that, i.e., due to the Weil pairing that, $\text{im}(\rho_{E,\ell_i})=\text{GL}_2(\mathbb{Z}/\ell_i\mathbb{Z}),~\forall~1\leq i\leq d.$ 

On the other hand, for any elliptic curve $E/K_m,$ if the image of $\rho_{E,m}$ is $\text{GL}_2(\mathbb{Z}/m\mathbb{Z}),$ then we must have that $|K_m(\zeta_m):K_m|=\phi(m).$ This is because, it follows from Weil-pairing that $\zeta_m\in K_m(E[m])$ and $\sigma(\zeta_m)=\zeta_m^{\det(\rho_{E,m}(\sigma))},$ where $\zeta_m$ is the primitive $m^{\text{th}}$ root of unity. In particular, the fixed field of $\text{SL}_2(\mathbb{Z}/m\mathbb{Z})$ correspond to $K(\zeta_m).$ This shows that $[K_m(\zeta_m):K_m]=\phi(m).$ Instead, we have
\[[K_m(\zeta_m):K_m]=[\mathbb{Q}(\zeta_m):K_m \cap \mathbb{Q}(\zeta_m)]\leq [\mathbb{Q}(\zeta_m):K_m]< \phi(m),\]
since $K_m$ is a non-trivial extension of $\mathbb{Q}$ contained in $\mathbb{Q}(\zeta_m),$ a contradiction. 
\qed
\begin{remark}\rm
 In part $(b)$ of Theorem~\ref{thm:nf version}, we assume that $m$ is square-free. Note that this assumption is necessary. Otherwise, we immediately have the result because $$\text{GL}_2(\mathbb{Z}/\mathbb{Z})=\prod_{\substack{\ell,~\mathrm{prime}\\\ell \mid m}}\text{GL}_2(\mathbb{Z}/\mathbb{Z}).$$
\end{remark}

Let us now discuss some interesting consequences of Theorem~\ref{thm:nf version}. If one wants to make Proposition 5.7 in \cite{Zyw10} effective, one can see the effective constant is given by $\frac{m^3}{\phi(m)}.$ First let us recall the definition of the set $B_{K,m}(x)$ from \cite{Zyw10}.
\begin{corollary} For any $m \in \mathbb{N}$ with $(m, 30d_K)=1,$ the effective constant is given by
\[\sum_{\ell\mid m } \frac{\ell^3}{\phi(\ell)}.\]

\end{corollary}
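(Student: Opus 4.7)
The plan is to reduce from composite modulus $m$ to its prime divisors via Theorem~\ref{thm:nf version}(a), and then apply Zywina's Proposition 5.7 prime-by-prime. Concretely, I would first observe that the hypothesis $(m, 30 d_K) = 1$ guarantees both $(m,30)=1$ and $(m,d_K)=1$, so the second branch of the hypothesis in Theorem~\ref{thm:nf version}(a) is in force. Hence for every elliptic curve $E/K$,
\[
\rho_{E,m} \text{ is surjective} \iff \rho_{E,\ell} \text{ is surjective for every prime } \ell \mid m.
\]
Taking complements in the set of $E/K$ ordered by height, this gives the set containment
\[
B_{K,m}(x) \;\subseteq\; \bigcup_{\ell \mid m} B_{K,\ell}(x),
\]
where $B_{K,m}(x)$ is Zywina's notation recalled just above the statement.

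Next I would apply Proposition~5.7 of \cite{Zyw10} separately to each prime $\ell \mid m$. For prime modulus $\ell$, the effective constant appearing in Zywina's bound is controlled by the number of conjugacy classes of $\mathrm{GL}_2(\mathbb{Z}/\ell\mathbb{Z})$ of determinant $1$; as noted in the discussion preceding the corollary, this count is of order $\frac{\ell^3}{\phi(\ell)}$. Summing over the primes dividing $m$, and using the subadditivity of $|B_{K,m}(x)|$ that comes from the inclusion above, one obtains an effective bound whose constant is
\[
\sum_{\ell \mid m} \frac{\ell^3}{\phi(\ell)},
\]
as claimed. This is much better than the naive $\frac{m^3}{\phi(m)}$ one gets by applying Zywina directly to modulus $m$, which is precisely the qualitative improvement emphasised in the introduction.

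The main obstacle, in my view, is bookkeeping rather than mathematics: one must check that the error terms in Zywina's sieve, when applied at each prime $\ell$ and then summed, really do combine into a single bound of the asserted shape without introducing additional factors that depend on $m$ in a less favourable way. A secondary point worth double-checking is that Theorem~\ref{thm:nf version}(a) is applied correctly at composite moduli even when $E$ has complex multiplication, since the corollary as stated is unconditional on CM; this is fine because part (a) is itself stated with no CM hypothesis. Once these two verifications are carried out, the corollary follows immediately from the decomposition above combined with the prime-level form of Proposition~5.7 in \cite{Zyw10}.
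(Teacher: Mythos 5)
Your argument matches the paper's proof essentially step for step: Theorem~\ref{thm:nf version}(a) (valid here because $(m,30d_K)=1$) gives the containment $B_{K,m}(x) \subseteq \bigcup_{\ell \mid m} B_{K,\ell}(x)$, and Zywina's prime-level bound $|B_{K,\ell}(x)| \leq \frac{\ell^3}{\phi(\ell)}\frac{\log x}{x^{[K:\mathbb{Q}]/2}}$ is then summed over $\ell \mid m$, with the $\ell$-independent factor ensuring no unwanted $m$-dependence. The paper simply records the containment in its contrapositive form $\bigcap_{\ell \mid m} B_{K,\ell}(x)^c \supseteq B_{K,m}(x)^c$, but the reasoning is the same.
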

\begin{proof} It follows from the proof of Proposition 5.7 in \cite{Zyw10}, that $|B_{K,\ell}(x)| \leq \frac {\ell^3}{\phi(\ell)} \frac{\log x}{x^{{\frac{[K:\mathbb{Q}]}{2}}}}.$
It is now enough to show that
\[B_{K,m}(x)\subseteq \bigcup_{\ell \mid m} B_{K,\ell}(X).\]
It follows from Theorem~\ref{thm:nf version} that $\bigcap_{\ell \mid m}B_{K,\ell}(x)^{c}\supseteq B_{K,m}(x)^c,$ and this completes the proof.
\end{proof}
Theorem~\ref{thm:nf version}~motivates us to study many generalizations. The immediate ones that come to our mind are Theorem~\ref{thm:product}~and~\ref{thm:abv version}, which we will discuss now. 
\subsection{Proof of Theorem~\ref{thm:product}}
\subsection{Proof of part (a)}

One direction is obvious. For the other direction, by Theorem~\ref{thm:nf version} we have that
\[\text{im}(\rho_{E_{1},m})=\text{im}(\rho_{E_{2},m})=\text{GL}_{2}(\mathbb{Z}/m\mathbb{Z}).\]
Let $G$ be $\text{im} (\rho_{E_{1}\times E_{2},m}) \subseteq \Delta(m)$. It follows from the given condition that $\Delta(\ell) $ is a quotient of $G$, for any prime $\ell \mid m.$ Denote $S\Delta(\ell)$ to be the set of elements in $\Delta(\ell)$ whose each block has determinant $1,$ and $G^{(\ell)}$ be $\ker \circ \pi_{m/\ell^{r}}(G) \subseteq \Delta(\ell^{r})$, where $r$ is the maximum power of $\ell$ dividing $m$, and $\pi_{m/\ell^r}$ be the natural projection $\Delta(m)\to \Delta(m/\ell^r).$

Moreover, we consider $G'=\text{pr}_{\ell}(G^{(\ell)}) \subseteq \Delta(\ell)$, and set 
$$G'_{1}= \Big\{g\in \text{GL}_2(\mathbb{F}_{\ell}) :  \begin{pmatrix}
I &   \\
 & g\\

\end{pmatrix} \in G'\Big\},~G'_{2}= \Big\{g\in \text{GL}_2(\mathbb{F}_{\ell}) :  \begin{pmatrix}
g &   \\
 & I\\

\end{pmatrix} \in G'\Big\}.$$ 
For the given condition, Since $\Delta(\ell)=G',$ for every prime $\ell \mid m.$ In particular, $\text{comm}(G')=\text{S}\Delta(\ell).$ It now follows from Lemma~\ref{lem:alg grp}, which we shall prove in the next section, that $\text{S}\Delta(\ell^r)\subseteq G^{(\ell)}.$

According to Lemma 3.3 in [6], $G \neq \Delta(\mathbb{Z}/m\mathbb{Z}) $ implies there exist a set $C_{1} \times C_{2} \subseteq \text{GL}_{2}(\mathbb{Z}/m\mathbb{Z}) \times\text{GL}_{2}(\mathbb{Z}/m\mathbb{Z})$, closed under conjugation such that $\text{det} (C_{1}) = \text{det} (C_{2}) = 1$ with $G \cap (C_{1} \times C_{2}) = \phi$. This implies that, there exists at least one element $(c_1,c_2)\not\in G$ for which $\det(c_1)=\det(c_2)=1.$ In particular, there exists a prime $\ell \mid m$ such that the component of $(c_1,c_2)$ associated to $\ell$ is not in $G^{(\ell)}$, which implies $S\Delta(\ell^r)\not\subseteq G^{(\ell)}.$ This contradicts the conclusion of the previous paragraph.
Let $E$ be an elliptic curve over an arbitrary number field $K$, and consider the following invariant
\[A(E)= 30 \prod_{\ell \in M_E}  \ell,\]
where $M_E$ is the set of primes $\ell \geq 7$ such that $\rho_{E,\ell}$ is not surjective. Now for a pair of elliptic curves $E_1\times E_2$ over $K$, let us consider 
\[A(E_1 \times E_2)=30 \prod_{\ell \in M_{E_1 \times E_2}} \ell,\]
and $M_{E_1 \times E_2}$ is the set of primes $\ell$ for which $\text{im}(\rho_{E_1\times E_2,\ell})\neq \Delta(\ell).$ It is clear that
\[\text{lcm}(A(E_1), A(E_2)) \mid  A(E_1 \times E_2).\] 
If they are not equal, then there exists a prime $\ell$ such that $\text{im}(\rho_{E_i,\ell})=\text{GL}_2(\mathbb{Z}/\ell\mathbb{Z})$ for $i \in \{1,2\}$, and $\text{im}(\rho_{E_1\times E_2, \ell}) \neq \Delta(\ell).$ Now by Lemma 5.1 of \cite{MW93}, $\rho_{E_1,\ell}$ and $\rho_{E_2,\ell}$ are conjugate up-to a quadratic character of $\text{Gal}(\overline{\mathbb{Q}}/\mathbb{Q})$. In fact, it follows from Proposition 1 in \cite{MW93} and Theorem~\ref{thm:product} the following.
\begin{corollary}
    Let $K$ be any number field satisfying one the conditions in part (a) of Theorem~\ref{thm:nf version}, and $E_1,~E_2/K$ be two elliptic curves without complex multiplication, which are not isogenous over $\overline{K}$. then we have the following equality 
    $$\mathrm{im}(\rho_{E_1\times E_2,m})=\mathrm{GL}_2(\Z/m\Z),$$
    for any integer $m$ co-prime to $A(E_1\times E_2)\ll \max\{h_1,h_2\}^{O(1)},$ where $h_1$ and $h_2$ respectively be the heights of $E_1$ and $E_2.$ 
\end{corollary}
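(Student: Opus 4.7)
The plan is to combine Theorem~\ref{thm:product} with the bound $\mathrm{lcm}(A(E_1),A(E_2)) \mid A(E_1\times E_2)$ recorded just before the corollary, and then to bound each factor polynomially in the heights. First I would observe that by construction $30 \mid A(E_1\times E_2)$, so the hypothesis $(m,A(E_1\times E_2))=1$ forces $(m,30)=1$, which is the coprimality condition needed to invoke Theorem~\ref{thm:product}. For any prime $\ell \mid m$, the coprimality also implies $\ell \notin M_{E_1\times E_2}$, so $\mathrm{im}(\rho_{E_1\times E_2,\ell}) = \Delta(\ell)$ holds by the very definition of $M_{E_1\times E_2}$. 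Applying Theorem~\ref{thm:product} (whose hypotheses on $K$ are granted) then upgrades this to $\mathrm{im}(\rho_{E_1\times E_2,m}) = \Delta(m)$, which is the content of the displayed equality (read, as the context before the corollary makes clear, as $\Delta(m)$ inside $\mathrm{GL}_2(\Z/m\Z)\times \mathrm{GL}_2(\Z/m\Z)$).

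Next I would establish the height bound $A(E_1\times E_2)\ll \max\{h_1,h_2\}^{O(1)}$ in two steps. For the individual factors $A(E_i)$, I would cite the known effective polynomial bounds on the largest non-surjective prime $\ell \in M_{E_i}$ for non-CM elliptic curves (as surveyed in \cite{Coj}, \cite{Zyw15}), together with a standard counting of the number of such bad primes, to get $A(E_i)\ll h_i^{O(1)}$. For the remaining primes in $A(E_1\times E_2)$ not already captured by $\mathrm{lcm}(A(E_1),A(E_2))$, the discussion preceding the corollary shows that any such prime $\ell$ forces $\rho_{E_1,\ell}$ and $\rho_{E_2,\ell}$ to be conjugate up to a quadratic character. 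Proposition~1 of \cite{MW93} then gives that this can happen only for $\ell$ bounded polynomially in the heights, since $E_1$ and $E_2$ are assumed non-isogenous over $\overline{K}$; summing over such finitely many $\ell$ keeps the product polynomial in $\max\{h_1,h_2\}$.

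Putting both estimates together yields the stated effective bound, and the corollary follows. The main obstacle I anticipate is the effectivity of the quadratic-twist step: one must argue that only polynomially many primes $\ell$ can witness a twist-equivalence $\rho_{E_1,\ell}\sim \chi\otimes \rho_{E_2,\ell}$ without the elliptic curves being geometrically isogenous. This is where invoking the Masser--Wüstholz isogeny bound in the form used in \cite{MW93} is essential; once that input is in place, the rest is a clean bookkeeping combining it with Theorem~\ref{thm:product} and the CM-free surjectivity bounds.
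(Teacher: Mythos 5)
Your proposal is correct and follows essentially the same route as the paper: the displayed equality (read as $\Delta(m)$) is obtained by feeding the definition of $M_{E_1\times E_2}$ and the coprimality hypothesis into Theorem~\ref{thm:product}, and the height bound on $A(E_1\times E_2)$ comes from the Masser--W\"ustholz results (Proposition 1 of \cite{MW93}) combined with the twist-conjugacy observation preceding the corollary. The paper's own proof is just a two-line citation of these same ingredients; you have merely unpacked the bookkeeping.
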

\begin{proof}
    It follows from Theorem~\ref{thm:product} that $\mathrm{im}(\rho_{E_1\times E_2,m})=\mathrm{GL}_2(\Z/m\Z)$, for any integer $m$ co-prime to $A(E_1\times E_2).$ For an upper bound on $A(E_1\times E_2),$ the reader may look at Proposition 1 in \cite{MW93}.
\end{proof}
\begin{remark}
    Jones in \cite{Jones13} showed that almost all pairs of elliptic curves over $\overline{\mathbb{Q}}$ are pairwise non-isogenous.
\end{remark}

\subsection{Proof of Theorem~\ref{thm:abv version}}
We need the following generalization of Lemma \ref{lem:Ser1}.
\begin{lemma} \label{lem:alg grp}
Let $p$ be a prime, $r$ be any integr, and $\mr{G}$ be an algebraic subgroup of $\mathrm{SL}_{n}(\mathbb{F}_{p^r}).$ Suppose that, $\mr{H}$ is a subgroup of $\mr{G}$ with $\mathrm{pr}_p(\mr{H})  = \mathrm{pr}_p(\mr{G}),$ then we have $\mr{H}=\mr{G}.$ More generally, if $\mr{H} \subseteq \mr{G}(\mathbb{Q}_{p}),$ with $\mathrm{pr}_p(\mr{H}) = \mathrm{pr}_p(\mr{G})$, then $\mr{H} =\mr{G}(\mathbb{Q}_{p}).$ 
\end{lemma}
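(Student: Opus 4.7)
The strategy mirrors Serre's original proof of Lemma~\ref{lem:Ser1} in the $\mathrm{SL}_2$ setting: filter the principal congruence kernel by its natural powers, identify each graded piece with the Lie algebra $\mathfrak{g}(\mathbb{F}_p)$, and bootstrap from surjectivity mod $p$ through all the higher levels. Concretely, I would set $K_i = \ker\bigl(\mr{G}(\mathbb{Z}/p^r\mathbb{Z}) \twoheadrightarrow \mr{G}(\mathbb{Z}/p^i\mathbb{Z})\bigr)$ for $1 \le i \le r$, so that $\{1\} = K_r \subseteq K_{r-1} \subseteq \cdots \subseteq K_1$. Smoothness of $\mr{G}$ furnishes an isomorphism of abelian groups $K_i/K_{i+1} \stackrel{\sim}{\longrightarrow} \mathfrak{g}(\mathbb{F}_p)$ via $I + p^i X \mapsto (X \bmod p)$, and a direct matrix calculation shows that the conjugation action of $\mr{G}(\mathbb{Z}/p^r\mathbb{Z})$ on $K_i/K_{i+1}$ factors through $\mr{G}(\mathbb{F}_p)$ as the standard adjoint representation. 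Writing $M_i := \mr{H} \cap K_i$, the hypothesis $\mathrm{pr}_p(\mr{H}) = \mathrm{pr}_p(\mr{G})$ immediately gives $\mr{H} \cdot K_1 = \mr{G}(\mathbb{Z}/p^r\mathbb{Z})$, so the whole problem collapses to showing $M_1 = K_1$.

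\textbf{Inductive core.} I would proceed by descending induction on $i$, from the trivial base $M_r = K_r$ down to $i = 1$. Assuming $M_{i+1} = K_{i+1}$, the quotient $M_i/M_{i+1}$ is an $\mr{H}$-stable, hence $\mr{G}(\mathbb{F}_p)$-stable, subspace of $\mathfrak{g}(\mathbb{F}_p)$; for $\mr{G}$ semisimple (and simply connected) in good characteristic the adjoint module is simple, so $M_i/M_{i+1}$ is either $0$ or all of $\mathfrak{g}(\mathbb{F}_p)$. To produce a nonzero element I would invoke the commutator identity
\[
[I + p^a X,\, I + p^b Y] \equiv I + p^{a+b}[X, Y] \pmod{p^{a+b+1}},
\]
which, combined with perfectness of $\mathfrak{g}(\mathbb{F}_p)$ and the surjection $\mr{H} \twoheadrightarrow \mr{G}(\mathbb{F}_p)$, lets me lift brackets of Lie-algebra elements to genuine elements of $\mr{H}$ sitting in $M_i \setminus M_{i+1}$. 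The $\mathbb{Q}_p$ version follows by passage to the limit: at each level $r$ the finite case gives that the image of $\mr{H}$ in $\mr{G}(\mathbb{Z}/p^r\mathbb{Z})$ surjects, and closedness of $\mr{H}$ in $\mr{G}(\mathbb{Z}_p) = \varprojlim_r \mr{G}(\mathbb{Z}/p^r\mathbb{Z})$ closes the argument.

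\textbf{Main obstacle.} The most delicate point is the last step of the induction, namely actually producing the nonzero element of $M_i/M_{i+1}$; this is essentially the statement that the central extension
\[
1 \longrightarrow K_i/K_{i+1} \longrightarrow \mr{G}(\mathbb{Z}/p^{i+1}\mathbb{Z}) \longrightarrow \mr{G}(\mathbb{Z}/p^i\mathbb{Z}) \longrightarrow 1
\]
does not split when restricted to the image of $\mr{H}$. This hinges on both perfectness of $\mathfrak{g}(\mathbb{F}_p)$ and irreducibility of the adjoint representation, each of which can fail at small primes (e.g.\ $p \mid n$ for $\mathrm{SL}_n$, or exceptional primes of the root datum of $\mr{G}$). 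The blanket restriction $(m, 30) = 1$ that runs through the paper is precisely what guarantees we remain in the good-characteristic regime for the groups ($\mathrm{SL}_2$ and $\mathrm{GSp}_{2g}$) that are actually invoked in Theorem~\ref{thm:product} and Theorem~\ref{thm:abv version}.
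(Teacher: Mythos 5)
Your argument takes a genuinely different route from the paper's. The paper's proof is elementary and computational: it takes $A=I+p^{k-1}U$, observes $\operatorname{tr}(U)=0$, decomposes $U$ as a sum of four square-zero matrices (citing a result of Pazzis, Lemma~\ref{Square}), argues by expanding the defining polynomials $f_s$ of $\mr{G}$ that $I+p^{k-1}U\in\mr{G}(\mathbb{Z}/p^k\mathbb{Z})$ forces $I+p^{k-2}U\in\mr{G}(\mathbb{Z}/p^{k-1}\mathbb{Z})$, lifts the latter to $h\in\mathrm{pr}_{p^{k}}(\mr{H})$ by induction, and finishes with $A=h^p$. You instead run the standard Lie-theoretic argument: filter the principal congruence kernel, identify the graded pieces with $\mathfrak{g}(\mathbb{F}_p)$ carrying the adjoint action, and exploit irreducibility and perfectness of the adjoint module. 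Your route is cleaner structurally and makes the true hypotheses visible; the paper's is more hands-on and superficially avoids invoking representation theory, at the price of the square-zero decomposition and a somewhat delicate polynomial computation.

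Two concrete points. First, the bracket identity $[I+p^aX,\,I+p^bY]\equiv I+p^{a+b}[X,Y]\pmod{p^{a+b+1}}$ with $a,b\ge 1$ only produces elements of $K_i$ for $i=a+b\ge 2$, so your commutator step cannot by itself handle the top layer $K_1/K_2$. You need a separate device there, typically the $p$-th power of a lift of a unipotent element of $\mr{G}(\mathbb{F}_p)$ (this is exactly Serre's original trick in the $\mathrm{SL}_2$ case, and also the mechanism the paper's ``$A=h^p$'' step relies on). As written, your plan does not name how that nonzero seed in $\mathfrak{g}(\mathbb{F}_p)$ is produced, which is precisely the step you yourself flag as the ``main obstacle.'' Second, you correctly observe that the adjoint-irreducibility and perfectness hypotheses confine the argument to semisimple (and essentially simply connected) $\mr{G}$ in good characteristic. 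This is a sharper and more accurate delimitation than the lemma's stated generality (``any algebraic subgroup of $\mathrm{SL}_n$''), which fails already for a split torus; the paper's own remark after the lemma restricts to groups containing $\mathrm{SL}_n$, a condition that is actually not met by the $\mathrm{Sp}_{2g}\subset\mathrm{SL}_{2g}$ application made of the lemma later, whereas your semisimple/good-characteristic formulation does cover both $\mathrm{SL}_2$ and $\mathrm{Sp}_{2g}$ under $(m,30)=1$.
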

\begin{proof}
Let us start with proving the first part because the second part is just a rewording of the former. We proceed by induction, to show that for any $2\leq k\leq r,$
$$\text{pr}_{p^k}(\mr{H})=\text{pr}_{p^k}(\mr{G}).$$ 
For each such $k$, consider the exact sequence $$1 \longrightarrow \Big \{A \in \text{G}(\mathbb{F}_{p^{k}}): A \text{ mod } p^{k-1} \equiv I \Big \} \longrightarrow \text{G}(\mathbb{F}_{p^{k}}) \longrightarrow \text{G}(\mathbb{F}_{p^{k-1}}) \longrightarrow 1.$$ Proceeding by induction, it is enough to show that 
$$\mr{pr}_{p^k}(\mr{H}) \supseteq \Big \{A \in \text{G}(\mathbb{F}_{p^{k}}): A \text{ mod } p^{k-1} \equiv I \Big \}.$$ Take an element $A=I+p^{k-1}U \in G(\mathbb{F}_{p^k})$ with $U \in \text{M}_n(\mathbb{F}_{p})$, and we want to show $A\in \mr{pr}_{p^k}(H).$ Note that $\text{det} (A) =1$ in $\mathbb{F}_{p^k},$ implies $\text{tr}(U)=0$ in $\mathbb{F}_{p}$. By lemma \ref{Square}, we can write $U$ as a sum of four square-zero matrices. It follows from induction that there exists $h \in \mr{pr}_{p^{k}}(\mr{H})$ such that $$ h= I+p^{k-2}U+p^{k-1}V$$ 
for some $V\in M_n(\mathbb{F}_p),$ provided that $I+p^{k-2}U \in \mr{G}( \mathbb{F}_{p^{k-1}})$. Since $\mr{G}$ is an algebraic group, it is given by zeroes of some polynomials, say by $f_{1},f_2,\cdots, f_{t}$. The identity matrix $I_{n \times n}$ is an element of $G$, in particular, we have
\begin{equation}\label{eqn:cond}
f_i\big(I_{n \times n}\big)=0, \hspace{.1cm} \text{for all}  \hspace{0.15cm} 1 \leq i\leq t.
\end{equation}

Since $I+p^{r-1}U\in \mr{G}(\mathbb{F}_{p^r})$, we have $$ f_{s}\Big ( \big(\delta_{ij}+p^{k-1}U_{ij}\big)_{1\leq i,j \leq n^{2}} \Big ) =0 \text{ mod }p^{k}, \text{ for all } 1\leq s\leq t.$$
Now any such $f_s$ is sum of the monomials of form $\gamma (\prod g_{i,i})^{\alpha_i} (\prod_{i\neq j} g_{i,j}^{\beta_{i,j}}).$ Putting $g_{i,j}=\delta_{i,j}+p^{k-1} U_{i,j}$ in each such monomial, and considering modulo $p^k,$ we get the following sum
\begin{equation}\label{eqn:terms}
    \prod (1+\alpha_i U_{i,i} p^{k-1})(\prod_{i \neq j} (p^{k-1} U_{i,j})^{\beta_{i,j}}.
    \end{equation}
If all of the $\beta_{i,j}$'s are zero, then the above summation is congruent to 
$$\prod (1+\alpha_iU_{i,i}p^{k-1}) \equiv 1+p^{k-1}\sum \alpha_i U_{i,i} \pmod {p^k}$$
provided that $k\geq 2.$ Now if some $\beta_{i,j}$ is more than $1$, the equation at (\ref{eqn:terms}) is already congruent to $0$ modulo $p^k,$ provided $k \geq 2.$ Same goes if two of the $\beta_{i,i}$'s are exactly $1$. Now, if only one of the $\beta_{i,j}$ is $1$ and others are zero, the equation at (\ref{eqn:terms}) is congruent to $p^{k-1}U_{i,j}$ modulo $p^k$ provided $k \geq 2.$ Combining all these observations, one can see the equation at $(\ref{eqn:cond})$ is congruent to 
\[f_s\big(I_{n \times n}\big)+p^{k-1}(\text{some term only involving}~ \alpha_i, U_{i,j}'s),\forall 1\leq s\leq t\]
modulo $p^k$. A similar argument also shows $$ f_{s}\Big ( (\delta_{ij}+p^{k-2}U_{ij})_{1\leq i,j \leq n^{2}} \Big ) =0 \text{ mod }p^{k-1},~\forall~1\leq i\leq k,$$ 
provided that $k \geq 3.$ So $I+p^{k-2}U \in \text{G}(\mathbb{F}_{p^{k-1}}),$ as desired. By induction, we now have such a desired $h\in \mr{pr}_{p^k}(H)$. And hence,
\[A=h^p \in \mr{pr}_{p^k}(H), \forall~2\leq k\leq r.\]

\end{proof}
\begin{remark}\rm
    The lemma above is not necessarily true for algebraic groups not containing $\text{SL}_n$. For a justification, two examples are addressed in the next section. Roughly speaking, this is the case because $p \mid \alpha-1$ does not imply $p^r \mid \alpha-1,$ while on the other hand, $p^r \not|~\alpha-1$ implies $p \not|~\alpha-1.$
\end{remark}

We are now finished with all the preparations. Let us now prove the main result about Abelian varieties. 
\subsection{Proof of the Theorem~\ref{thm:abv version}}
\subsection{Proof of part (a)}
Again, one direction is immediate. For the other direction, let $\mr{G}$ be the image of $\rho_{A,m}$. It follows from the given condition that, $\text{GSp}_{2g}(\mathbb{Z}/\ell\mathbb{Z})$ is a quotient of $\mr{G}$ for any prime $\ell \mid m$. In particular, $ \text{PSp}_{2g}(\mathbb{Z}/\ell\mathbb{Z}) \in \text{Occ}(\mr{G}), \text{ for any prime } \ell|m,$  since $\text{PSp}_{2g}(\mathbb{Z}/\ell\mathbb{Z})$ is simple and non-abelian for any $g>1$. From lemma \ref{Occ Sp} and corollary \ref{Occ Sp 2}, we have that $$\mr{PSp}_{2g}(\mathbb{Z}/\ell\mathbb{Z}) \in \text{Occ}(\text{pr}_\ell(G^{(\ell)})),~\mr{where}~\mr{G}^{(\ell)}=\text{ker}_{m/\ell^r} \mr{G}.$$

For the case $g=1$ Cojocaru argued from this point by showing $\text{pr}_{\ell}(\mr{G}^{(\ell)})$ contains $\text{SL}_{2}(\mathbb{Z}/\ell\mathbb{Z}).$ Her argument was based on the fact that a subgroup of $\text{GL}_2(\mathbb{Z}/\ell\mathbb{Z})$ not containing $\text{SL}_2(\mathbb{Z}/\ell\mathbb{Z})$ is solvable. However, the same argument does not work for the general cases. For example $\text{Sp}_{2g-2}(\mathbb{Z}/\ell\mathbb{Z})$ is not a solvable subgroup of $\text{GSp}_{2g}(\mathbb{Z}/\ell\mathbb{Z}).$ But on the other hand, $\frac{\text{pr}_{\ell}(G^{(\ell)})} {\text{pr}_{\ell}(G^{(\ell)})\cap \text{Sp}_{2g}(\mathbb{Z}/\ell\mathbb{Z})} \subseteq \mathbb{F}_{\ell}^{*}$ is abelian. In particular, we have $\text{PSp}_{2g}(\mathbb{Z}/\ell\mathbb{Z}) \in \text{ Occ}(\text{pr}_{\ell}(G^{(\ell)}) \cap \text{Sp}_{2g}(\mathbb{Z}/\ell\mathbb{Z}))$. Comparing cardinality of them, we have $\text{pr}_{\ell}(G^{(\ell)}) \cap \text{Sp}_{2g}(\mathbb{Z}/\ell\mathbb{Z}) = \text{Sp}_{2g}(\mathbb{Z}/\ell\mathbb{Z})$, and hence $\text{pr}_{\ell}(G^{(\ell)}) \supseteq \text{Sp}_{2g}(\mathbb{Z}/\ell\mathbb{Z}).$ It now follows from Lemma~\ref{lem:comm} that, $\text{comm}(\text{pr}_{\ell}(G^{(\ell)}))=\text{Sp}_{2g}(\mathbb{Z}/\ell\mathbb{Z}).$

On the other hand, $\text{Sp}_{2g} \subset \text{SL}_{2g}$ is an algebraic group, and hence it follows from the previous lemma that $$\text{Sp}_{2g} (\mathbb{Z}/\ell^{r}\mathbb{Z})=\text{comm}(G^{(\ell)}).$$ This shows that, $\text{Sp}_{2g} (\mathbb{Z}/m\mathbb{Z}) \subseteq G \subseteq \text{GSp}_{2g}(\mathbb{Z}/m\mathbb{Z}).$ It again follows from Lemma~\ref{lem:comm} that $\text{comm}(G)=\text{Sp}_{2g}(\mathbb{Z}/m\mathbb{Z}).$

It is now enough to show that $\phi(m)\mid [G:G'].$ Since $A$ is equipped with a polarization, due to Weil pairing the primitive $m^{\text{th}}$ root of unity $\zeta_m$ is in $K(A[m]),$ and hence, $|\text{Gal}(K(A[m]):K))|$ divides $[G:G'].$ Now the argument is now essentially same as part (a) of Theorem~\ref{thm:nf version}, we only need to ensure that $K(\zeta_m)$ is an extension of $K$ of degree $\phi(m).$
\subsection{Proof of part (b)} Let $m$ be a number co-prime to $30$ which is not squarefree. We can then consider $S$ to be the set of prime factors of $m.$ By the assumption, there exists s simply polarized abelian variety over $K$ of dimension $g$ for which $\text{im}(\rho_{A,\ell})\supseteq \text{Sp}_{2g}(\mathbb{Z}/\ell),\forall \ell \in S.$ We can now consider the same $K_m$ as in the part (a) of Theorem~\ref{thm:nf version} to ensure that $m$ is indeed a \textit{potential bad} number in this settings.  
\qed

\section{Further remarks}\label{sec:general} 
In this section, we shall discuss the analogous situations for modular forms and other algebraic groups. More precisely, we shall discuss the local-global property for the relevant Galois representations. 

\subsection{The modular analog}
Let $f(z)$ be any newform of weight $k$ and level $N$. It is known due to Deligne-Serre correspondence that, for any integer $m$ we have an associated Galois representation
\[\rho_{f,m}:\text{Gal}\left(\overline{\mathbb{Q}}/\mathbb{Q}\right) \longrightarrow \mathrm{GL}_2\left(\mathbb{Z}/m\mathbb{Z}\right),\]
such that $a(p)\pmod{m}\equiv \text{tr}\left(\rho_{f,m}(\text{Frob}_p)\right)$ for any prime $p \nmid Nm.$ When $f$ is without CM, it follows from \cite{Ribet85} that, there exists an integer $M_f$ such that for any integer $m$ co-prime to $M_f,$ the image of this representation is given by 
$$
\Delta_{k,m}=\left\{A \in \text{GL}_2\left(\mathbb{Z}/m\mathbb{Z}\right) \mid \det(A) \in ((\mathbb{Z}/m\mathbb{Z})^{*})^{k-1}\right\}.$$ 
Following the proof of Theorem~\ref{thm:nf version}, one could see that for any integer $m$ co-prime to $30,$ to guarantee $\text{im}(\rho_{f,m})$ contains $\text{SL}_2(\mathbb{Z}/m\mathbb{Z})$ if and only if, $\text{im}(\rho_{f,\ell})$ contains $\text{SL}_2(\mathbb{Z}/\ell\mathbb{Z})$ for any prime $\ell \mid m.$ In particular, one could perhaps get a smaller $M_f$, as long as we want the image to contain only $\text{SL}_2(\mathbb{Z}/m\mathbb{Z}).$ In this direction, we ask the following.
\begin{question}
    Is it true that $$\mathrm{im}(\rho_{f,m})=\Delta_{k,m}~\mathrm{if~and~only~if},~\mathrm{im}(\rho_{f,\ell})=\Delta_{k,\ell},$$ for any prime $\ell \mid m$?
\end{question}
It is not hard to notice that the answer to this question is yes, provided that $\zeta_m^{(k-1,\phi(m))}$ is in the field corresponding to $\text{ker}(\rho_{f,m}).$

Moreover, any cuspform $f(z)$ can be uniquely written as $c_1f_1+c_2f_2+\cdots c_rf_r,$ where $c_1,c_2,\cdots, c_r\in \overline{\mathbb{Q}}.$ One can attach a Galois representation $\rho_{f,m}:\mathrm{Gal}\left(\overline{\mathbb{Q}}/\mathbb{Q}\right) \to \mathrm{GL}_{2r}\left(\Z/m\Z\right)$ defined by the map
\begin{equation}\label{eqn:map}
\sigma \mapsto \begin{psmallmatrix}
    \rho_{f_{1,\ell}}(\sigma) & & \\
    & \rho_{f_{2, \ell}}(\sigma) & &\\
    & & \ddots &\\
    & & & \rho_{f_{r,\ell}}(\sigma)\\ 
  \end{psmallmatrix}.
  \end{equation}
In this case, the image is contained in $\Delta_{k_1,k_2,\cdots,k_r}(m),$ where $\Delta_{k_1,k_2,\cdots,k_r}(m)$ denotes the set of all block matrices of size $2\times 2$ in $\mathrm{GL}_{2}\left(\Z/m\Z\right)$ in which determinant of each block is a $k_i-1^{th}$ power of some element in the multiplicative group $(\mathbb{\Z}/{m\Z})^{*}.$ Arguing similarly as in the proof of Theorem~\ref{thm:product} one can see that, $\text{im}(\rho_{f,m})$ contains $\text{SL}_2(\mathbb{Z}/m\mathbb{Z})^r$ if and only if, $\text{im}(\rho_{f,\ell})$ contains $\text{SL}_2(\mathbb{Z}/\ell\mathbb{Z})^r$ for any prime $\ell \mid m.$ One can show that when $f_1,f_2,\cdots, f_r$ are not related by pairwise Dirichlet characters, $\text{im}(\rho_{f,m})$ contains $\text{SL}_2(\mathbb{Z}/m\mathbb{Z})^r$ for all but finitely many primes $\ell.$ Horeover, we again as this stronger question.

\begin{question}
   Is it true that $$\mathrm{im}(\rho_{f,m})=\Delta_{k_1,k_2,\cdots,k_r,m}~\mathrm{if~and~only~if},~\mathrm{im}(\rho_{f,\ell})=\Delta_{k_1,k_2,\cdots,k_r,\ell},$$ for any prime $\ell \mid m$? 
\end{question}

\subsection{On the local-global phenomenon} Motivating from the previous section let us discuss the most general situation in this direction. Let $A$ be an algebraic subgroup of $\mathrm{GL}_n$ and $\mr{G}$ be an arbitrary subgroup of $A(\mathbb{Z}/m\mathbb{Z})$ such that $\mathrm{pr}_{\ell}(G)= A(\mathbb{Z}/\ell\mathbb{Z})~\mathrm{for~any~prime}~\ell$ dividing $m.$ Then what can be said about the relation between $G$ and $A$? To be more precise, let us now recall Question~\ref{qn:general} that we raised in the introduction. In the previous section we already had an affirmative answer when $A=\mathrm{GSp}_{2g}$ and $G$ is image of the mod-$\ell$ Galois representation associated to some polarized abelian variety, and $m$ is an integer satisfying certain coprimality conditions. However this is not true in general. Here are a few examples to justify that.
\subsection{Examples} 

\subsubsection{} Let $A=\mathrm{GL}_2$ and $\mr{G}$ be a subgroup of $\mathrm{GL}_2(\mathbb{Z}/\ell^2\mathbb{Z})$ defined by

\[\mr{G}=\Big\{\begin{pmatrix}

a & b  \\
c & d\\
\end{pmatrix}\mid a,b,c,d \in \mathbb{Z}/\ell\mathbb{Z}, ad-bc \neq 0\Big\}.\]
\noindent It is clear that $\mr{G}$ is a subgroup of $\mathrm{GL}_2(\mathbb{Z}/\ell^2\mathbb{Z})$ with $$\text{pr}_{\ell}(G) =\mathrm{GL}_2(\mathbb{Z}/\ell\mathbb{Z}), \hspace{0.2cm}~\text{while}~\mr{G} \neq \mathrm{GL}_2(\mathbb{Z}/\ell^2\mathbb{Z}).$$

In the similar vein, we can also consider $A=\mathrm{GL}_n$ and $G$ be a subgroup of $A(\mathbb{Z}/\ell^2\mathbb{Z})$ given by a copy of $\mathrm{GL}_n(\mathbb{Z}/\ell\mathbb{Z})$. For a more non trivial example, one may consider $A$ to be the set of Borel elements in $\mathrm{GL}_2$ and $G$ to be the set of all Borel elements over $\mathbb{Z}/\ell\mathbb{Z}.$
\subsection{Positive answer in general terms}
We first give an affirmative answer to Question~\ref{qn:general} under the following conditions.
\begin{enumerate}
    \item  $\mr{comm}(A(\mathbb{Z}/\ell\mathbb{Z})) \subseteq \mathrm{pr}_{\ell}(G).$ 
    \item $|\mr{comm}(G)|=\frac{|A(\mathbb{Z}/m\mathbb{Z})|}{|\mr{comm}(A(\mathbb{Z}/m\mathbb{Z}))|}.$
\end{enumerate}
Since $G$ is a subgroup of $A(\mathbb{Z}/m\mathbb{Z}),$ it follows immediately from $(1)$ that
\[\text{pr}_{\ell}(\mr{comm}(G^{(\ell)})) = \mr{comm}(A(\mathbb{Z}/\ell\mathbb{Z})),\]
where $G^{(\ell)}=\text{ker}\circ \pi_{m/\ell^r}(G).$ It now follows from Lemma~\ref{lem:alg grp} that $\mr{comm}(G^{(\ell)})=\mr{comm}(A(\mathbb{Z}/\ell^r\mathbb{Z}),$ because $\mr{comm}(A(\mathbb{Z}/m\mathbb{Z}))$ is contained in $\text{SL}_{n}(\mathbb{Z}/m\mathbb{Z}).$ In particular, we have
\[\mr{A(\mathbb{Z}/m\mathbb{Z})} \subseteq \mr{G} \subseteq A(\mathbb{Z}/m\mathbb{Z}).\]
Condition $(1)$ and $(2)$ together now shows $\mr{G}=A(\mathbb{Z}/m\mathbb{Z}).$

Note that we are putting an extra local condition at $(2)$, and that is not exactly our question was originally about. From the arguments presented in the previous section, we see that $(2)$ is actually true for the cases related to abelian varieties. In general we do not necessarily have that luxury because, once we have $A(\mathbb{Z}/\ell\mathbb{Z})$ is a quotient of $\mr{G},$ we perhaps will not get a simple non-abelian quotient of $A(\mathbb{Z}/\ell\mathbb{Z})$. So to avoid $(2),$ we might try to put the condition that 
\begin{equation}\label{eqn:cond1}
\mr{Sp}_{2g}(\mathbb{Z}/m\mathbb{Z}) \subseteq A(\mathbb{Z}/m\mathbb{Z}),
\end{equation}
for some $g\leq n/2,$ say. This condition implies that $\text{PSp}_{2g}(\mathbb{Z}/\ell\mathbb{Z}) \in \text{Occ}\Big(G^{(\ell)}\Big).$ Now arguing similarly as the proof of Theorem 1.3, we have \[\text{Sp}_{2g}(\mathbb{Z}/m\mathbb{Z})\subseteq \mr{G} \subseteq A(\mathbb{Z}/m\mathbb{Z}).\]
In particular, 
\[|\mr{G}/\mr{comm}(\mr{G})| \leq |\text{Sp}_{2g}(\mathbb{Z}/m\mathbb{Z})/|\text{Sp}_{2k}(\mathbb{Z}/m\mathbb{Z})|=O(m^{6(g-k)}\phi(m)).\]
Summarizing the discussion, we see that we can remove $(2)$ at the cost of two more conditions. One of them is $(\ref{eqn:cond1}),$ and the other one is if we assume that $|\mr{G}/\mr{comm}(\mr{G})|$ is sufficiently larger than $m^{6(g-k)}\phi(m).$


\section*{Acknowledgement}\label{ack}
I first thank the Mathematics Institute of Georg-August-Universit\"at G\"ottingen for providing a beautiful environment for study and research. I am also indebted to my supervisor, Harald Helfgott, and Jitendra Bajpai, for many helpful discussions. I always found them kind and helpful in all my needs. This work is financially supported by ERC Consolidator grant 648329 (GRANT).

\bibliographystyle{amsplain}
\bibliography{references}

\providecommand{\bysame}{\leavevmode\hbox to3em{\hrulefill}\thinspace}
\providecommand{\MR}{\relax\ifhmode\unskip\space\fi MR }
\providecommand{\MRhref}[2]{%
  \href{http://www.ams.org/mathscinet-getitem?mr=#1}{#2}
}
\providecommand{\href}[2]{#2}
\begin{thebibliography}{10}

\bibitem{Camp}
Francesco Campagna, \emph{Cyclic reduction of elliptic curves}, ALGANT Master
  Thesis (2018).

\bibitem{Coj}
Alina~Carmen Cojocaru, \emph{On the surjectivity of the {G}alois
  representations associated to non-{CM} elliptic curves}, Canad. Math. Bull.
  \textbf{48} (2005), no.~1, 16--31, With an appendix by Ernst Kani.
  \MR{2118760}

\bibitem{Paz}
Cl\'{e}ment de~Seguins~Pazzis, \emph{A note on sums of three square-zero
  matrices}, Linear Multilinear Algebra \textbf{65} (2017), no.~4, 787--805.
  \MR{3606833}

\bibitem{Gra}
David Grant, \emph{A formula for the number of elliptic curves with exceptional
  primes}, Compositio Math. \textbf{122} (2000), no.~2, 151--164. \MR{1775416}

\bibitem{Ha}
Chris Hall, \emph{An open-image theorem for a general class of abelian
  varieties}, Bull. Lond. Math. Soc. \textbf{43} (2011), no.~4, 703--711, With
  an appendix by Emmanuel Kowalski. \MR{2820155}

\bibitem{Jones13}
Nathan Jones, \emph{Pairs of elliptic curves with maximal {G}alois
  representations}, J. Number Theory \textbf{133} (2013), no.~10, 3381--3393.
  \MR{3071819}

\bibitem{Jones15}
\bysame, \emph{{${\rm GL}_2$}-representations with maximal image}, Math. Res.
  Lett. \textbf{22} (2015), no.~3, 803--839. \MR{3350106}

\bibitem{MW93}
D.~W. Masser and G.~W\"{u}stholz, \emph{Galois properties of division fields of
  elliptic curves}, Bull. London Math. Soc. \textbf{25} (1993), no.~3,
  247--254. \MR{1209248}

\bibitem{Ribet85}
Kenneth~A. Ribet, \emph{On {$l$}-adic representations attached to modular
  forms. {II}}, Glasgow Math. J. \textbf{27} (1985), 185--194. \MR{819838}

\bibitem{Serre68}
Jean-Pierre Serre, \emph{Abelian {$l$}-adic representations and elliptic
  curves}, W. A. Benjamin, Inc., New York-Amsterdam, 1968, McGill University
  lecture notes written with the collaboration of Willem Kuyk and John Labute.
  \MR{0263823}

\bibitem{Zyw10}
David Zywina, \emph{Elliptic curves with maximal {G}alois action on their
  torsion points}, Bull. Lond. Math. Soc. \textbf{42} (2010), no.~5, 811--826.
  \MR{2721742}

\bibitem{Zyw15}
\bysame, \emph{On the surjectivity of mod $\ell$ representations associated to
  elliptic curves}, ar{X}iv (2015).

\end{thebibliography}

\end{document}